\newcounter{theoremintro}
\newtheorem{thmintro}[theoremintro]{Theorem}
\newcommand{\id}{\operatorname{id}} 
\newcommand{\Aut}{\operatorname{Aut}}
 \newcommand{\Ext}{\operatorname{Ext}}
\newcommand{\im}{\operatorname{im}}
\newcommand{\Span}{\operatorname{Span}}
\newcommand{\N}{{\mathbb{N}}}
\newcommand{\ev}{\operatorname{ev}}
   \theoremstyle{plain}%default
   \newtheorem{thm}{Theorem}[section]
   \newtheorem{prop}[thm]{Proposition}
   \newtheorem{lemma}[thm]{Lemma}  
   \theoremstyle{definition}
   \newtheorem{defn}[thm]{Definition}
   \newtheorem{example}[thm]{Example}
   \theoremstyle{remark}
\newenvironment{rcases}
  {\left.\begin{aligned}}
  {\end{aligned}\right\rbrace}
\definecolor{mybgcolor}{gray}{0.8}
\definecolor{myframecolor}{rgb}{.647,.129,.149}
\newmdenv[style=mystyle]{important}
   \numberwithin{equation}{section}
\title[Asymptotic lifting]{Asymptotic lifting for completely positive maps}
        \date{\today}
\date{\today}
\author{Marzieh Forough}
\address{Marzieh Forough
	Department of Mathematics, Faculty of Information Technology, Czech Technical University
	in Prague, Thakurova 9, 160 00, Prague 6, Czech Republic.
Institute of Mathematics, Czech Academy of Sciences
115 67
Praha 1, Czech republic}
\email{foroumar@fit.cvut.cz}
\author[Eusebio Gardella]{Eusebio Gardella}
\address{Eusebio Gardella
Department of Mathematical Sciences, Chalmers University of
Technology and University of Gothenburg, Gothenburg SE-412 96, Sweden.}
\email{gardella@chalmers.se}
\urladdr{www.math.chalmers.se/~gardella}
\author{Klaus Thomsen}
\address{Klaus Thomsen
Department of Mathematics, Aarhus University, Ny Munke\-gade,
8000 Aarhus C, Denmark}
\email{matkt@math.au.dk }
\thanks{
The first named author was supported by GA\v{C}R project 19-05271Y, RVO:67985840.
The second named author was partially supported
by the DFG through an \emph{eigene Stelle}
and under Germany's Excellence Strategy ``Mathematics 
M\"unster: Dynamics--Geometry--Structure''. The third named author was supported by the DFF-Research Project 2 `Automorphisms and Invariants of Operator Algebras', no.~ 7014-00145B}
\begin{document}

\begin{abstract} 
Let $A$ and $B$ be $C^*$-algebras with $A$ separable, let $I$ be an ideal in $B$, and let $\psi\colon A\to B/I$ be
a completely positive contractive
linear map. We show that 
there is a continuous family $\Theta_t\colon A\to B$, 
for $t\in [1,\infty)$, of lifts of $\psi$ that 
are asymptotically linear, asymptotically 
completely positive and asymptotically contractive. 
If $\psi$ 
is of order zero, then $\Theta_t$ can be chosen to have this property asymptotically. If $A$ and $B$ carry continuous actions of a second 
countable locally
compact group $G$ such that $I$ is $G$-invariant and 
$\psi$ is equivariant, we show that the family 
$\Theta_t$ can be chosen to be asymptotically equivariant.
If a linear completely positive lift for $\psi$ exists, we can arrange that $\Theta_t$ is linear and completely positive
for all $t\in [1,\infty)$. In the equivariant setting, if $A$, $B$ and $\psi$ are unital, we show that asymptotically linear unital lifts are only guaranteed to exist if $G$ is amenable. This 
leads to a new characterization of amenability in terms
of the existence of asymptotically equivariant unital
sections for quotient maps.
\end{abstract}

\maketitle

\renewcommand*{\thetheoremintro}{\Alph{theoremintro}}
\section{Introduction}

Lifts and sections for maps is a recurring theme in many mathematical fields. In the theory of $C^*$-algebras and its applications, one of the most prominent examples of this is the question about sections for a surjective $*$-homomorphism between $C^*$-algebras; for example in the work of Choi and Effros in \cite{CE} and in the theory of extensions of $C^*$-algebras, initiated in \cite{BDF} and \cite{Ar}. In view of the recent increased activity in research on $C^*$-dynamical systems, it seems inevitable that questions about lifts and sections will become more important than they already are in this category.
Addressing questions of this nature is the main motivation for the present work, while concrete applications will be given elsewhere; see \cite{FG}. 

Let $G$ be a second countable locally compact group. We shall work with pairs $(A,\alpha)$ where $A$ is a $C^*$-algebra and $\alpha\colon  G \to \Aut(A)$ is a homomorphism from $G$ into the group $\Aut(A)$ of automorphisms of $A$ (also called 
an action). When $\alpha$ is continuous in the sense that for all $a\in A$, the assignment $g \mapsto \alpha_g(a)$ is continuous as a map $G\to A$, we say that $(A,\alpha)$ is a \emph{G-algebra}. 
In many cases it is clear from the context what $\alpha$ is and we shall then use the notation
$g \cdot a= \alpha_g(a) $
for $g\in G$ and $a\in A$, since it often clarifies the statements. 

We are concerned with short exact sequences of $G$-algebras of the form
\begin{equation*}
\centerline{\begin{xymatrix}{
0 \ar[r] & (I, \gamma) \ar[r]^{\iota} & (A,\alpha) \ar[r]^-q  & (B,\beta)  \ar[r] & 0,}
\end{xymatrix}}
\end{equation*} 
where $\iota\colon I \to A$ and $q\colon A \to B$ are $G$-equivariant $*$-homo\-mor\-phisms such that $\iota$ is injective, $q$ is surjective and $\ker( q) = \im( \iota)$. Given another $G$-algebra $(S,\delta)$ and a map $\psi\colon S \to B$, which is not necessarily a $*$-homo\-mor\-phism, a \emph{lift} of $\psi$ is a map $\psi'\colon S \to A$ such that the diagram
\begin{equation*}
\begin{xymatrix}{
  &  A\ar[d]^-q \\
 S \ar[r]_-\psi \ar@{-->}^{\psi'}[ur]& B} 
\end{xymatrix}
\end{equation*} 
commutes. The most important case is when $\psi$ is linear and completely positive, and this is also the case we shall consider. In a setting where the group actions are absent (or trivial), the question about existence of a lift which is also linear and completely positive was considered by Choi and Effros in \cite{CE}.
They showed that a completely positive linear lift exists when 
the map $\psi$ is nuclear, which is in particular the case if 
either of $A$, $B$ or $S$ is nuclear. The main impetus for their work was the theory of extensions of $C^*$-algebras, which was also the main motivation for many subsequent examples, beginning with that of J. Anderson \cite{A}, showing that linear completely positive lifts do not always exist outside of the nuclear case. 

The present work was motivated by a study of $G$-algebras which made two of the authors suspect that if $\psi$ is $G$-equivariant and if a completely positive linear lift exists, then there also exists an \emph{almost equivariant}, completely positive linear lift $\psi'$, 
in the sense that given 
$\varepsilon >0$ and compact
subsets $K\subseteq G$ and $F\subseteq S$, we have 
$$
\max_{g\in K}\left\|\psi'(g \cdot s)- g \cdot \psi'(s)\right\| \leq \varepsilon
$$ 
for all $s\in F$. That this is indeed the case when $S$ is separable is a consequence of our Theorem~A below, which is inspired by the E-theory of Connes and Higson \cite{CH}. In fact,
the main result of this paper, Theorem \ref{04-01-21d}, deals with the same setting except that we neither assume that $\psi$ is equivariant nor that there is a linear completely positive contractive lift. We now reproduce part of its statement, and comment on it below.

\begin{thmintro}
Let $G$ be a second countable locally compact group, let $q\colon A\to B$ be a surjective equivariant map between $G$-algebras, let $S$ be a separable $G$-algebra, and let
$\psi\colon S \to B$ be a completely positive linear contraction. Then there is a continuous family $\Theta=(\Theta_t)_{t\in [1,\infty)} \colon S\to  A$ of lifts for $\psi$ which moreover is:
\begin{itemize}
 \item[(a)] asymptotically linear, asymptotically completely positive and asymptotically completely contractive (see Definition~\ref{15-01-21});
% for all $n\in\N$ and all $s\in M_n(S)$, we have 
% $$\lim\limits_{t\to \infty} \|(\Theta_t \otimes \id_{M_n})(s)\|  =  \|(\psi \otimes \id_{M_n})(s)\| .$$
% \item[(b)] asymptotically completely positive (see ??);
 %for $s,s'\in S$, we have 
 %$\lim_{t \to \infty} \|\Theta_t(s)\Theta_t(s')\| = \|\psi(s)\psi(s')\|$;
% \item[(d)] if $I$ is $\sigma$-unital, then
%$\lim\limits_{t \to \infty} \Theta_t(s)x   =  0$
%for all $s \in S$ and $x \in I$. 
 \item[(b)] asymptotically $(G,\psi)$-equivariant, meaning that for all $s,s'\in S$, we have 
 $$\lim_{t \to \infty} \left\| g \cdot \Theta_t(s) - \Theta_t(h\cdot s')\right\| = \left\|g \cdot \psi(s) - \psi(h\cdot s')\right\|,$$
 uniformly for $g,h$ in compact subsets of $G$.
 \item[(c)] For all $s,s'\in S$, we have
\[\lim_{t \to \infty} \left\|\Theta_t(s)\Theta_t(s')\right\| = \left\|\psi(s)\psi(s')\right\|.\]
\end{itemize}

Moreover, if a linear, completely positive lift for $\psi$ exists, then the continuous family $\Theta$ as above can be chosen so that, in addition, each map $\Theta_t$ is completely positive.

\end{thmintro}

Note that we do not assume $\psi$ to be equivariant in the theorem above.
In particular, condition (b) above implies that $\Theta$ is asymptotically equivariant whenever $\psi$ is equivariant, which is often the most interesting case. For some of the applications in \cite{FG}, however, the more general case is necessary. 

We now comment on the conclusion in part (c).
Following its introduction by Winter and Zacharias in \cite{WZ}, the notion of order zero for completely positive maps has become important in the classification of simple $C^*$-algebras. (Recall that a completely positive map $\psi\colon S\to B$ is said to be of \emph{order zero} if $\psi(s)\psi(s')=0$ whenever $ss'=0$.) In this setting, results that allow one to lift order zero maps from 
quotients are very relevant, and this often requires strong assumptions on the ideal (such as Kirchberg's notion of a 
$\sigma$-ideal; see Definition~1.5 in~\cite{Kir_central_06},
and see Definition~7.2 in~\cite{GarHirVac_strongly_2023} for 
its extension to the equivariant setting).
The conclusion in part~(c) of Theorem~A above implies that order zero maps 
admit asymptotically order zero lifts. We expect our results to be relevant in the study of the structure of C*-dynamical systems, particularly for actions of nonamenable groups on stably finite C*-algebras, which
has recently received increased attention; see \cite{GarGefKraNarVac_tracially_2024, GarKraVac_actions_2024, GarGefNarVac_dynamical_2024, GarLup_group_2021}.

When $S$, $A$ and $B$ are unital and $\psi$ is also unital, it is natural to look for asymptotically $(G,\psi)$-equivariant lifts that are also unital. These, however, will generally not exist even with the relaxed algebraic requirements described above. 
As it turns out, their existence characterizes amenability of $G$
(see Theorem \ref{05-01-21b}):

\begin{thmintro}
Let $G$ be a second countable locally compact group. Then 
the following are equivalent:
\begin{enumerate}
\item For every surjective equivariant map $q\colon A\to B$ between $G$-algebras, for every separable $G$-algebra $S$, and for 
every \emph{unital} completely positive linear contraction $\psi\colon S\to B$, there exists a continuous family $\Theta$ as in
the conclusion of Theorem~A which moreover satisfies $\Theta_t(1)=1$ for all $t \in [1,\infty)$.
\item $G$ is amenable.
\end{enumerate}
\end{thmintro}

Even without the $G$-actions, our Theorem~A provides new information. As pointed out above, there are by now a wealth of examples where the quotient map in an extension of separable $C^*$-algebras does not admit a completely positive and linear section. It follows from Theorem \ref{04-01-21d} below that there always exists a family of sections which has these properties asymptotically, and at the same time is asymptotically orthogonality preserving. This result should be compared with the examples in \cite{MT2} showing that there are separable $C^*$-algebras with extensions by the compact operators such that no other extension can be added to result in an extension for which the quotient map admits a family of sections that constitute an asymptotic $*$-homomorphism. Thus there are certainly limits to which properties of sections one can hope to get by relaxing algebraic conditions to asymptotic ones. Although 
it is conceivable that a linear lift can be proved to exist in the setting of Theorem~A, our results
appear to come close to being optimal.

Since this paper is almost exclusively devoted to the proof
of Theorem~A, which is quite long and technical, we describe 
the main conceptual steps in its proof. For the sake of the exposition, we focus on constructing a \emph{sequence} $(\Theta_n)_{n\in\N}$ as above. 
\vspace{.1cm}

\begin{enumerate}
\item[\textbf{Step 1:}] By taking (forced) 
unitizations everywhere, we may assume 
that $S$, $A$ and $\psi$ are unital, and that there exists
a $G$-invariant state $\chi$ on $S$. This is done
in Theorem~\ref{04-01-21d}.
\vspace{.1cm}

\item[\textbf{Step 2:}] Using standard arguments involving the Busby invariant of $0\to I \to A \to B\to 0$, we may assume that 
$A=M(I)$ is the multiplier algebra of $I$, that $B=Q(I)$ is
the associated Calkin algebra, that $S$ is a subalgebra of $B$
and that $\psi$ is the canonical inclusion. 
This is done in Proposition~\ref{18-01-21h}.
\vspace{.1cm}

\item[\textbf{Step 3:}] Let $(F_n)_{n\in\N}$ be an increasing sequence
of finite subsets of $S$ with dense union. Using the Bartle-Graves
theorem, one obtains the existence of a sequence $(\psi_n)_{n\in\N}$ of continuous lifts for $\psi$ such that 
\begin{enumerate}
 \item[(a)] $\psi_n$ is linear on the span of $F_n$;
 \item[(b)] $\psi_{n+1}$ agrees with $\psi_n$ on the span of $F_n$.
\end{enumerate}
This is done in Lemma~\ref{19-21-21b}. When a cp lift $\widetilde{\psi}$ for $\psi$ exists, then we may take 
$\psi_n$ to be $\widetilde{\psi}$ for all $n\in\N$. 
\vspace{.1cm}

\item[\textbf{Step 4:}] Using arguments of Kasparov \cite{K}, we carefully construct an almost $G$-invariant 
approximate identity $(y_n)_{n\in\N}$ in $I$, such that, when setting $\Delta_n=(y_n-y_{n-1})^{1/2}$, the sequence $(\psi^n)_{n\in\N}$ of maps $S\to M(I)$ given by
\[\psi^n(s)=\chi(s)y_n+\sum_{k=n}^\infty \Delta_k \psi_k(s)\Delta_k\]
is approximately $\mathbb{Q}[i]$-linear and approximately cpc \emph{on a dense subalgebra $S_0$ of $S$}, and approximately equivariant on 
$S_0$ with respect to a \emph{dense subgroup $G_0$ of $G$}. 
This is done in Lemma~\ref{01-02-21d}.

One would like to deduce, using 
continuity, that those properties then hold globally on $S$ and $G$. For this, one would need to know that for every $s\in S$,
the sequence $(\psi^n(s))_{n\in\N}$ is uniformly bounded. This 
is, however, not guaranteed in our construction, and fixing this is the 
goal of the next step.

\vspace{.1cm}

\item[\textbf{Step 5:}] Set 
\[\mathcal{A}=\big\{(a_n)_{n\in\N} \mbox{ bounded sequence in } A\colon a_n-a_1\in I \mbox{ for all } n\in\N\big\}\]
and let $\mathcal{I}$ denote the ideal of $\mathcal{A}$
consisting of all bounded sequences in $I$. For $n\in\N$, write 
$\pi_n\colon \mathcal{A}\to A$ for the projection onto the 
$n$-th coordinate.

The sequence 
$(\psi^n(s))_{n\in\N}$ obtained in the previous step gives us
a map $\Psi_0 \colon S_0 \to \mathcal{A}/\mathcal{I}$ which is
$\mathbb{Q}[i]$-linear, $G_0$-equivariant, and bounded. 
Thus, $\Psi_0$ 
extends by continuity to a $G$-equivariant linear map 
$\Psi\colon S\to \mathcal{A}/\mathcal{I}$ which is then
completely positive and contractive. 

Use Bartle-Graves again to find a continuous section $\sigma\colon \mathcal{A}/\mathcal{I}\to \mathcal{A}$, and set $\Theta_n=\pi_n\circ \sigma\circ \Psi\colon S\to A$ for all $n\in\N$.
Then $(\Theta_n)_{n\in\N}$ has the 
desired properties. This is done in detail in Theorem~\ref{02-02-21r}.
\end{enumerate}

\vspace{.2cm}

\textbf{Acknowledgements:} the authors would like to thank the 
anonymous referee for their very thorough feedback, which led to a
significant improvement of the presentation of the results.

\section{Lifting from Calkin algebras}\label{lifts}

In this section, we solve a particular case of the lifting problem
described in the introduction. More specifically, we will assume
that in the extension $0\to I \to A \to B \to 0$ we actually
have $A=M(I)$ and $B=M(I)/I$, so that we will focus on lifting
maps from the Calkin algebra $Q(I)=M(I)/I$ of a not necessarily unital
$C^*$-algebra $I$. In the following subsection, we focus on 
induced actions on Calkin algebras and aim at producing a suitable 
sequence $(\Delta_n)_{n\in\mathbb{N}}$ of positive contractions in
$M(I)$; see Lemma~\ref{31-01-21a}.
In the second subsection we will then use these elements to solve
the special case of the lifting problem described above.

\subsection{Actions on Calkin algebras}

%In this section we fix a locally compact second countable group $G$, a $G$-algebra $(U,\delta)$ such that $U$ is unital and separable, and a $G$-invariant state $\chi\colon U \to\mathbb{C}$; that is, a state on $U$ such that $\chi \circ \delta_g = \chi$ for all $g \in G$. We also fix a $G$-algebra $(I,\gamma)$ such that $I$ is $\sigma$-unital. 
For a locally compact group $G$ and a $C^*$-algebra $D$, we define an \emph{action} of $G$ on $D$ to be 
a group homomorphism $\delta\colon G\to \mathrm{Aut}(D)$.
Moreover, we say that $\delta$ is \emph{continuous} if for every $d\in D$, 
the map $G\to D$ given by $g \mapsto \delta_g(d)$ is continuous. 
In this case, we say that $(D,\delta)$ is a $G$-algebra.

It is common in the literature to drop the adjective ``continuous'' when discussing group actions,
and implicitly assume that all actions on $C^*$-algebras are continuous. There are, however, some relevant constructions in $C^*$-dynamics that produce non-continuous actions, even if one starts with 
continuous ones. Examples of such constructions are multiplier and Calkin algebras as well as ultraproducts and relative commutants (see \cite{GL} for the latter).
Since Calkin algebras play an important role in our work, we need
to introduce some terminology in order to deal with non-continuous 
actions.

\begin{defn}
Let $\delta \colon G\to \Aut(D)$ be a (not necessarily continuous) action of a locally compact group $G$ on 
a $C^*$-algebra $D$. We let
\[
D_{\delta}= \{d \in D \colon \mbox{the map } G\to D \mbox{ given by } g \mapsto \delta_g(d) \mbox{ is continuous}\}
\] 
denote the \emph{continuous part} of $D$ with respect to $\delta$. \end{defn}

It is not difficult to check that $D_\delta$ is a $C^*$-subalgebra of 
$D$ which is left invariant under $\delta$, and thus $\delta$ restricts
to a continuous action of $G$ on $D_{\delta}$. In other words, $(D_\delta,\delta)$ is a $G$-algebra.
The following is the main example of a not necessarily continuous action that 
we will consider in this work.

\begin{example}\label{eg:Calkin}
Let $G$ be a locally compact group and let $(I,\gamma)$ be a $G$-algebra. Then $\gamma$ extends to an action $\tilde{\gamma}$ of $G$ on the multiplier algebra $M(I)$ of $I$ satisfying $\tilde{\gamma}_g(m)x = \gamma_g(m\gamma_{g^{-1}}(x))$ for all $x \in I$ and all $g\in G$.
This action is in general not continuous (unless $G$ is discrete or $I$ is unital). Denote by $Q(I)=M(I)/I$ the associated (generalized) Calkin algebra and by 
$q_I\colon M(I)\to Q(I)$ the quotient map.
Since $I$ is $\tilde{\gamma}$-invariant, there is a (not necessarily continuous) action $\overline{\gamma}\colon G\to \Aut(Q(I))$ such that
$\overline{\gamma}_g\circ q_I=q_I\circ\tilde{\gamma}_g$ for all $g\in G$. Then $(M(I)_{\tilde{\gamma}},\tilde{\gamma})$
and $(Q(I)_{\overline{\gamma}},\overline{\gamma})$ are $G$-algebras,
and 
\[q_I\colon \big(M(I)_{\tilde{\gamma}},\tilde{\gamma}\big)\to \big(Q(I)_{\overline{\gamma}},\overline{\gamma}\big)\] 
is an equivariant
$\ast$-homomorphism.
\end{example}

The following lemma, which is a direct consequence of work by L.~Brown \cite{Br}, will be very useful to us.

\begin{lemma}\label{07-01-21} 
Let $G$ be a locally compact group and let $(I,\gamma)$ be a $G$-algebra. Then
\begin{equation*}
\centerline{\begin{xymatrix}{
0 \ar[r] & (I, \gamma) \ar[r] & (M(I)_{\tilde{\gamma}},\tilde{\gamma}) \ar[r]^-{q_I}  & (Q(I)_{\overline{\gamma}}, \overline{\gamma})  \ar[r] & 0,}
\end{xymatrix}}
\end{equation*} 
is a short exact sequence of $G$-algebras.
\end{lemma}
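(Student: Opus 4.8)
The plan is to check the three constituent claims separately — that all three objects are $G$-algebras, that the inclusion $I\hookrightarrow M(I)_{\tilde{\gamma}}$ and $q_I$ are equivariant $\ast$-homomorphisms, and that the sequence is exact — and to observe that everything except surjectivity of $q_I$ onto $Q(I)_{\overline{\gamma}}$ is a routine unwinding of definitions; surjectivity is the one substantial point and is exactly where Brown's work enters.

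First I would record the general fact that for any $C^*$-algebra $D$ carrying a (not necessarily continuous) action $\delta$ by $\ast$-automorphisms, the continuous part $D_{\delta}$ is a $\delta$-invariant $C^*$-subalgebra of $D$ on which $\delta$ restricts to a continuous action: closedness of $D_{\delta}$ uses that each $\delta_g$ is isometric, so that a norm limit of elements with continuous orbit maps again has a continuous orbit map; stability under products and adjoints uses that each $\delta_g$ is a $\ast$-automorphism together with joint norm-continuity of multiplication on bounded sets; and $\delta$-invariance is immediate from $\delta_g\circ\delta_h=\delta_{gh}$. Applying this to $(M(I),\tilde{\gamma})$ and to $(Q(I),\overline{\gamma})$ shows that $(M(I)_{\tilde{\gamma}},\tilde{\gamma})$ and $(Q(I)_{\overline{\gamma}},\overline{\gamma})$ are $G$-algebras, while $(I,\gamma)$ is one by hypothesis.

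Next I would note that $\tilde{\gamma}_g|_I=\gamma_g$, so continuity of $\gamma$ gives $I\subseteq M(I)_{\tilde{\gamma}}$; hence $I$ is a closed two-sided ideal of $M(I)_{\tilde{\gamma}}$ and the inclusion is an injective equivariant $\ast$-homomorphism. Since $\overline{\gamma}_g\circ q_I=q_I\circ\tilde{\gamma}_g$, for $m\in M(I)_{\tilde{\gamma}}$ the map $g\mapsto\overline{\gamma}_g(q_I(m))=q_I(\tilde{\gamma}_g(m))$ is continuous, so $q_I$ carries $M(I)_{\tilde{\gamma}}$ into $Q(I)_{\overline{\gamma}}$ and restricts there to an equivariant $\ast$-homomorphism whose kernel is $\ker(q_I)\cap M(I)_{\tilde{\gamma}}=I\cap M(I)_{\tilde{\gamma}}=I$. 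This reduces the lemma to surjectivity of $q_I\colon M(I)_{\tilde{\gamma}}\to Q(I)_{\overline{\gamma}}$.

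The surjectivity is the main obstacle. Given $x\in Q(I)_{\overline{\gamma}}$, choose any $m\in M(I)$ with $q_I(m)=x$; then $g\mapsto\tilde{\gamma}_g(m)+I$ is norm-continuous in $Q(I)$, i.e. $\dist(\tilde{\gamma}_g(m)-m,I)\to 0$ as $g\to e$, and the task is to correct $m$ by an element of $I$ so that the corrected orbit map is genuinely continuous into $M(I)$. Because $I$ is contained in the continuous part $M(I)_{\tilde{\gamma}}$, this is precisely the situation covered by Brown's results on continuity of isometric group actions on Banach spaces in \cite{Br}, which yield $q_I(M(I)_{\tilde{\gamma}})=Q(I)_{\overline{\gamma}}$. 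Alternatively, the correction can be produced directly from Lemma~\ref{Kasparov}: apply it to a separable $\tilde{\gamma}$-invariant $C^*$-subalgebra containing $m$ to obtain a quasicentral approximate unit $(x_n)$ for $I$ with $\max_{g\in K}\|\gamma_g(x_n)-x_n\|\to 0$, and assemble the corrected lift from a telescoping series in the increments $x_{n+1}-x_n$; here quasicentrality absorbs the defect measured by $\dist(\tilde{\gamma}_g(m)-m,I)$, while the asymptotic invariance of the $x_n$ forces the series to lie in $M(I)_{\tilde{\gamma}}$. Either way $q_I$ is surjective, and the sequence is exact.
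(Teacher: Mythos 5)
Your main argument is correct and matches the paper's: everything is a routine unwinding except that $q_I$ carries $M(I)_{\tilde{\gamma}}$ onto $Q(I)_{\overline{\gamma}}$, and both you and the paper attribute this to Theorem~2 in \cite{Br}. A small cosmetic difference: the paper reads Brown's theorem as saying directly that $q_I^{-1}(Q(I)_{\overline{\gamma}}) \subseteq M(I)_{\tilde{\gamma}}$, so that any multiplier $m$ lifting an element of $Q(I)_{\overline{\gamma}}$ already has a continuous orbit map --- no correction by an element of $I$ is needed. Your phrasing in terms of a correction is harmless, but it slightly understates what Brown's theorem gives.

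The proposed self-contained alternative via Lemma~\ref{Kasparov}, however, has a genuine circularity. That lemma is stated only for separable $C^*$-subalgebras $M_0 \subseteq M(I)_{\tilde{\gamma}}$, i.e.\ subalgebras of the continuous part. You want to apply it to a subalgebra containing the lift $m$, but whether $m$ has a continuous orbit is exactly the thing being proved. In the paper's proof of Lemma~\ref{Kasparov}, the hypothesis $M_0 \subseteq M(I)_{\tilde{\gamma}}$, together with $\sigma$-compactness of $G$, is precisely what guarantees that the $\gamma$-invariant $C^*$-subalgebra $M_{00}$ generated by $\{\gamma_g(m') \colon g \in G,\ m' \in M_0 \cup \{d\}\}$ is separable; if the orbit of $m$ is not norm-continuous, it need not be separable, and the construction fails before one even reaches the Hahn--Banach step. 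So the telescoping-series argument cannot be bootstrapped from Lemma~\ref{Kasparov} as stated, and the appeal to \cite{Br} (or an independent argument in its place) remains essential.
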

\begin{proof} The only non-trivial fact is that $ Q(I)_{\overline{\gamma}} \subseteq q_I\left(M(I)_{\tilde{\gamma}}\right)$, which follows from Theorem 2 in \cite{Br} since that theorem implies that the
restriction of $\tilde{\gamma}$ to $q_I^{-1}\left( Q(I)_{\overline{\gamma}}\right)$ is continuous.
\end{proof}

Our starting point is the following variation of the lemma on the top of page 152 in~\cite{K}. We include a proof because it is a key lemma and we shall rely on properties of the construction that are not explicit in \cite{K}.

\begin{lemma}\label{Kasparov} (Kasparov, \cite{K}) 
Let $G$ be a locally compact group and let $(I,\gamma)$ be a 
$\sigma$-unital
$G$-algebra.
Let $0 \leq d \leq 1$ be a strictly positive element in $I$ 
and let $M_0\subseteq M(I)_{\tilde{\gamma}}$ be a
separable $C^*$-subalgebra. Then there exists a countable approximate
unit $(x_n)_{n =1}^{\infty}$ for $I$ contained in $C^*(d)$ with the following properties:
\begin{itemize}
\item[(a)] $0 \leq x_n\leq 1$ for all $n\in \mathbb{N}$;
\item[(b)] $x_{n+1}x_n=x_n$ for all $n\in \mathbb{N}$;
\item[(c)] $\lim\limits_{n\to \infty}\|x_nb-bx_n\|=0$ for all $b\in M_0$;
\item[(d)] $\lim\limits_{n\to\infty} \max\limits_{g\in K}\|\gamma_g(x_n)-x_n\|=0$ for all 
compact subsets $K\subseteq G$.
\end{itemize}
\end{lemma}
\begin{proof} 
For each $n \in \mathbb N$, let $g_n\colon [0,1] \to [0,1]$ be the continuous function which vanishes between 0 and $\frac{1}{2}\left(\frac{1}{n} +  \frac{1}{n+1}\right)$; is linear between $\frac{1}{2}\left(\frac{1}{n} +  \frac{1}{n+1}\right)$ and $\frac{1}{n}$, and is constant equal to 1 for $t \geq \frac{1}{n}$. Using functional calculus, we set $a_n = g_n(d)\in I$. Then $0 \leq a_n \leq 1, a_{n+1}a_n = a_n$, and $\lim_{n \to \infty} a_n d= d$. In particular, $(a_n)_{n\in\N}$ is an approximate unit in $I$ since $\overline{dI} = I$. Since $G$ is $\sigma$-compact, the set 
$$
\big\{\gamma_g(m)\colon g \in G, m \in M_0 \cup \{d\}\big\}
$$ 
generates a separable $\gamma$-invariant $C^*$-subalgebra $M_{00}\subseteq M(I)_{\tilde{\gamma}}$ which contains $d$ and $M_0$. Let $(F_n)_{n=1}^{\infty}$ be an increasing sequence of finite subsets with dense union in $M_{00}$ and let $(V_n)_{n\in\N}$ be an increasing sequence of 
open subsets of $G$ with compact closures such that $\bigcup_{n\in\N} V_n = G$. For $N\in\N$, let $X_N$ denote the convex hull of $\{a_k \colon k \geq N\}$. 

We claim that 
%\begin{obs}\label{06-01-21c} 
given $n ,N\in \mathbb N$ and $\varepsilon > 0$, there is $x\in X_N$ such that $\|\gamma_g(x) - x\| \leq \varepsilon$ for all $g \in \overline{V_n}$ and $\|x b-bx\| \leq \varepsilon$ for all $b \in F_n$. To establish the claim, set $I_{00} = I \cap M_{00}$. For $x \in X_N$, let $f_x \in C(\overline{V_n},I_{00})$ and 
$h_x\colon F_n \to I_{00}$ be given by 
$$f_x(g) = \gamma_g(x) - x \ \ \mbox{ and } \ \ h_x(b) = xb-bx$$ 
for $g\in \overline{V_n}$ and $b \in F_n$.
Then $\Omega = \left\{ (f_x,h_x) \colon x \in X_N\right\}$ is a convex subset of the $C^*$-algebra $ C(\overline{V_n},I_{00}) \oplus C(F_n,I_{00})$. Assuming that the claim is not true, we can find $n,N\in \mathbb N$ such that the norm-closure of $\Omega$ does not contain $0$. By Hahn-Banach's separation theorem, there is then a $\delta > 0$ and elements $\Phi_1 \in C(\overline{V_n},I_{00})^*$ and $\Phi_2 \in C(F_n,I_{00})^*$ such that 
\begin{equation*}
|\Phi_1( f_x) + \Phi_2(h_x)|  \geq  \delta
\end{equation*} 
for all $x \in X_N$. Write $\Phi_1 = \omega_1 - \omega_2 + i (\omega_3 - \omega_4)$, where the $\omega_j$'s are positive linear functionals on $ C(\overline{V_n},I_{00})$.
For each $k\geq N$ define $A_k,B_k \in C\left(\overline{V_n},I_{00}\right)$ by $A_k(g) = a_k$ and $B_k(g) = \gamma_g(a_k)$ for $g \in \overline{V_n}$. Then $(A_k)_{k\in\N}$ and $(B_k)_{k\in\N}$ are both approximate units in $C(\overline{V_n},I_{00})$ and hence $\left\|\omega_j\right\| = \lim_{k \to \infty} \omega_j(B_k) = \lim_{k \to \infty} \omega_j(A_k) $ for all $j\in\N$. This implies that 
\[\lim_{k\to \infty} \Phi_1\left(B_k - A_k\right) = 0.\] Since $B_k -A_k = f_{a_k}$ it follows that there is an $L \geq N$ such that
\begin{equation}\label{06-01-21b}
\left|\Phi_2(h_x)\right|  \geq  \frac{\delta}{2} 
\end{equation}
for all $x\in X_L$. Let $C_x \in C(F_n,I_{00})$ and $B \in C(F_n, M_{00})$ be the elements defined such that $C_x(b) = x$ and $B(b) = b$ for $b \in F_n$. Then $(C_x)_{x \in X_L}$ is a convex approximate unit in $C(F_n,I_{00})$ which is an ideal in $C(F_n,M_{00})$. Since $h_{x} = C_xB-BC_x$, the inequality \eqref{06-01-21b} contradicts the lemma on page 330-331 in \cite{Ar}. This proves the claim.

It is now straightforward to use the claim 
to construct an increasing sequence $(k_n)_{n\in\N}$ in 
$\mathbb N,$ and elements $x_n$ is the convex hull of 
$$
\left\{ a_j\colon \ k_n \leq j \leq k_{n+1}-1\right\}
$$ 
such that
\begin{itemize}
\item $\|x_nb-bx_n\|\leq \frac{1}{n}$ for all $b\in F_n$;
\item $\|\gamma_g(x_n)-x_n\|\leq \frac{1}{n}$ for all $g \in V_n$, and
\item $\left\|x_n d - d \right\| \leq \frac{1}{n}$.
\end{itemize}
The sequence $(x_n)_{n\in\N}$ will then have the desired properties.
\end{proof}

The following is the main technical lemma of this subsection, and 
will be needed in the upcoming one in order to produce suitable
lifts along the quotient map $M(I)\to Q(I)$.

\begin{lemma}\label{31-01-21a} 
Let $G$ be a locally compact group, let $(I,\gamma)$ be a 
$\sigma$-unital $G$-algebra,
and let $d\in I$ be a strictly positive element. 
Let $(Z_n)_{n\in\N}$ be an increasing sequence of compact
subsets of $M(I)_{\tilde{\gamma}}$, and let $(V_n)_{n\in\N}$
be an increasing sequence of open subsets of $G$ with compact
closures.
Then there is an approximate unit $(y_n)_{n=1}^{\infty}$ for $I$ contained in $C^*(d)$, such that, if we set
\[
\Delta_0 = \sqrt{y_{1}} \ \text{and} \  \Delta_n = \sqrt{y_{n+1} - y_n},
\]
for $n\geq 1$, then the following properties are satisfied:
\begin{itemize}
\item[(i)] $0 \leq y_n  = y_{n+1}y_n\leq y_{n+1} \leq 1$ for all $n\in\N$,
\item[(ii)] $\lim\limits_{n \to \infty}\max\limits_{g\in K} \|\gamma_g(y_n) -y_n\| = 0$ for all compact subsets $K\subseteq G$,
\item[(iii)] $\left\|z(1-y_n)\right\|\leq\left\|q_I(z)\right\| + \frac{1}{n}$ for all $z \in Z_n$, 
%\end{itemize}
%and there is an increasing sequence $i_1 < i_2 < i_3 < \cdots$ in $\mathbb N$ such that when we s
%the sequence $\left\{\Delta_n\right\}_{n=0}^\infty$ has the following properties: 
%\begin{itemize}
\item[(iv)] $\Delta_{n+1} y_n = 0$,
\item[(v)] $\left\| \Delta_n z -z \Delta_n \right\|\leq2^{-n}$ for all $z \in Z_n$,
\item[(vi)] $\left\|\gamma_g(\Delta_n) - \Delta_n\right\|\leq2^{-n}$ for all $g \in V_n$,
\end{itemize}
 for all $n\geq 1$.
\end{lemma}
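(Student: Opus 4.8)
The plan is to apply Lemma~\ref{Kasparov} to produce a sufficiently slowly-growing approximate unit, and then pass to a subsequence so that the extra conditions (iii)--(vi) hold. First I would set $M_0 = C^*\big(\bigcup_n Z_n \cup \{d\}\big)$, a separable $C^*$-subalgebra of $M(I)_{\tilde\gamma}$, and apply Lemma~\ref{Kasparov} with this $M_0$ and this $d$ to obtain a countable approximate unit $(x_m)_{m=1}^\infty$ for $I$ inside $C^*(d)$ satisfying (a)--(d). Conditions (i) and (ii) are then automatic for any subsequence $y_n = x_{m_n}$ (using that the $x_m$ already commute multiplicatively via $x_{m+1}x_m = x_m$, and that the $\gamma_g$-oscillation vanishes). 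Condition (iv) is the reason for the telescoping choice $\Delta_n = \sqrt{y_{n+1}-y_n}$: since $y_{n+1}y_{n-1} = y_{n-1}$ once the indices are nested, functional calculus in the commutative algebra $C^*(d)$ gives $(y_{n+1}-y_n)y_{n-1} = 0$, hence $\Delta_{n+1}y_n$... — here I need to be slightly careful and arrange the subsequence so that $y_{n+2}y_n = y_n$, i.e. leave a one-step gap, which forces $(y_{n+2}-y_{n+1})y_n = 0$ and thus $\Delta_{n+1}y_n = 0$ after reindexing.

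Next I would extract the subsequence to secure (iii), (v), (vi) simultaneously. For (iii): for a fixed $z \in M(I)_{\tilde\gamma}$ one has $\limsup_m \|z(1-x_m)\| = \|q_I(z)\|$ (this is the standard fact that $\|z + I\| = \lim_m \|z(1-x_m)\|$ for an approximate unit of $I$); since each $Z_n$ is compact, a finite-net argument lets me pick $m$ large enough that $\|z(1-x_m)\| \le \|q_I(z)\| + \tfrac1n$ uniformly over $z \in Z_n$. For (v) and (vi) I would use that $\|\Delta_n z - z\Delta_n\|$ and $\|\gamma_g(\Delta_n) - \Delta_n\|$ are controlled by $\|x_{m_{n+1}} - x_{m_n}\|$-type quantities together with (c) and (d) of Lemma~\ref{Kasparov}: more precisely, if the approximate unit oscillates slowly then $\Delta_n = \sqrt{x_{m_{n+1}} - x_{m_n}}$ is close in norm to a continuous function of an element that almost commutes with each $z \in Z_n$ and is almost $\gamma_g$-fixed on $V_n$, and continuity of the square-root (or a polynomial approximation on $[0,1]$) transfers the estimate — this is where one invokes uniform continuity of $\sqrt{\cdot}$ on $[0,1]$ and compactness of $Z_n$ and $\overline{V_n}$ to get the quantitative bound $2^{-n}$. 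Diagonalizing over $n$ against the countably many requirements produces the desired sequence $m_1 < m_2 < \cdots$.

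The main obstacle I expect is (v) and (vi): getting norm estimates on commutators and $\gamma_g$-oscillation of $\Delta_n = \sqrt{y_{n+1}-y_n}$ rather than of $y_{n+1} - y_n$ itself. Almost-commutation is not preserved by arbitrary continuous functional calculus with explicit constants, so one must either approximate $\sqrt{t}$ uniformly on $[0,1]$ by polynomials and track the (index-dependent) Lipschitz constants, choosing $m_{n+1}$ large enough afterwards, or invoke a uniform-continuity statement of the form: for every $\varepsilon>0$ there is $\eta>0$ such that $\|[e,z]\|<\eta$, $0\le e\le 1$, $\|z\|\le R$ imply $\|[\sqrt e, z]\|<\varepsilon$ (and similarly for the automorphisms $\gamma_g$, which are $*$-homomorphisms and hence commute with continuous functional calculus, so $\|\gamma_g(\sqrt{e}) - \sqrt{e}\| = \|\sqrt{\gamma_g(e)} - \sqrt{e}\|$ and this last is $\le$ a modulus-of-continuity function of $\|\gamma_g(e)-e\|$ — here $\gamma_g(e)$ and $e$ commute since both lie in $\gamma_g(C^*(d)) \cdot C^*(d)$... again one may need the nesting to force commutativity, or simply use that $\|\sqrt a - \sqrt b\| \le \|a-b\|^{1/2}$ for positive $a,b$, which holds unconditionally). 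With the unconditional Hölder estimate $\|\sqrt a - \sqrt b\| \le \|a - b\|^{1/2}$ in hand, (vi) is immediate from (ii) after passing to a subsequence, and (v) follows from the corresponding commutator estimate for square roots; so in fact the cleanest route is to avoid polynomial approximations entirely and lean on these operator-monotone/Hölder inequalities, leaving only the routine diagonalization.
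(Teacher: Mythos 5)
Your plan matches the paper's proof: apply Lemma~\ref{Kasparov} with $M_0$ generated by $\bigcup_n Z_n$ to obtain an approximate unit $(x_m)$ in $C^*(d)$, then extract a subsequence $(y_n)$ so that (iii), (v), (vi) hold with the stated quantitative bounds, using the lemma on page~332 of \cite{Ar} for the commutator estimate in (v) and a modulus-of-continuity estimate for $\sqrt{\cdot}$ for (vi), exactly as the paper does via polynomial approximation plus Arveson's lemma. Two small remarks: the unconditional H\"older bound $\|\sqrt a - \sqrt b\|\le\|a-b\|^{1/2}$ you favour is a cleaner route to (vi) than the paper's polynomial approximation, and your worry about (iv) is unnecessary --- from $x_{m+1}x_m = x_m$ one gets $x_{m'}x_m = x_m$ for all $m'>m$ by induction, so $y_{n+2}y_n = y_n$ holds automatically for every subsequence of $(x_m)$ and hence $(y_{n+2}-y_{n+1})y_n = 0$ without any ``one-step gap'' or reindexing.
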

\begin{proof} Let $M_0$ be the $C^*$-subalgebra of $M(I)_{\tilde{\gamma}}$ generated by $\bigcup_{n=1}^\infty Z_n$, and let $(x_n)_{n=1}^\infty$ be an approximate unit in $I$ contained  in $C^*(d)$ satisfying properties (a), (b), (c) and (d) in Lemma~\ref{Kasparov}. It is easy to see, by approximating the square root function on $[0,1]$ by polynomials, that for every $\varepsilon > 0$ there is $\delta > 0$ such that when $a \in I$ is a positive contraction 
satisfying $\left\|\gamma_g(a) - a\right\| \leq \delta$, then $\left\|\gamma_g(\sqrt{a}) - \sqrt{a}\right\| \leq \varepsilon$. 
Combined with the lemma on page 332 in \cite{Ar}, it follows that for each $n \in \mathbb N$ there is a $\delta_n > 0$ such that whenever $a \in I$ is a positive contraction, then
\begin{equation*}
\begin{rcases} 
\left\|\gamma_g(a) - a\right\| \leq \delta_n  \ \forall g \in V_n\\
\left\|az-za\right\| \leq \delta_n \ \forall z \in Z_n 
\end{rcases}
\Rightarrow
\begin{cases}
\left\|\gamma_g(\sqrt{a}) - \sqrt{a}\right\| \leq 2^{-n} \ \forall g \in V_n\\
\left\|\sqrt{a}z-z\sqrt{a}\right\| \leq 2^{-n} \ \forall z \in Z_n \   .
\end{cases}
\end{equation*}
We extract therefore a subsequence $(y_n)_{n\in\N}$ 
from $(x_n)_{n\in\N}$ such that 
$$
\left\|\gamma_g(y_{n+1} -y_n) - (y_{n+1} - y_n)\right\| \leq \delta_n
$$ 
for all $g \in V_n$ and $\left\|(y_{n+1} -y_n) z -z(y_{n+1} -y_n)\right\| \leq \delta_n$ for all $z \in Z_n$. Since $\lim_{n \to \infty} \left\|m (1-x_n)\right\| = \left\|q_I(m)\right\|$ for all $m \in M(I)$, we can also arrange that (iii) holds. Then $(y_n)_{n=1}^{\infty}$ will have all the stated properties.
\end{proof}

The following is Lemma 3.1 in \cite{MT1}, and we reproduce it here 
for the convenience of the reader, as we will use it repeatedly.

\begin{lemma}\label{01-02-21c}
Adopt all the assumptions and notations from Lemma~\ref{31-01-21a}, and 
let $(y_n)_{n=1}^{\infty}$ and $(\Delta_n)_{n=0}^\infty$ be as in its conclusion. 
Let $(m_n)_{n =0}^\infty$ be a uniformly bounded sequence in $M(I)$. Then the sum
\[
\sum_{n=0}^{\infty} \Delta_n m_n \Delta_n
\]
converges in the strict topology of $M(I)$ and for $k\in\N$
we have 
\[
\bigg\| \sum_{n=k}^{\infty} \Delta_n m_n \Delta_n\bigg\|\leq\sup_{n\geq k} \|m_n\|.\]
\end{lemma}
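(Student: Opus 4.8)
The plan is to establish three things in turn: (1) the partial sums $S_N = \sum_{n=0}^{N} \Delta_n m_n \Delta_n$ are strictly Cauchy, hence converge strictly to some $S \in M(I)$; (2) the tail bound $\bigl\|\sum_{n=k}^{\infty}\Delta_n m_n \Delta_n\bigr\| \leq \sup_{n\geq k}\|m_n\|$; and (3) that the strict limit genuinely lies in $M(I)$ (which is automatic once (1) is known, since $M(I)$ is strictly complete). The key structural input is property (i) of Lemma \ref{31-01-21a}, which gives $y_n = y_{n+1} y_n$ and $0 \leq y_n \leq y_{n+1} \leq 1$; from this one reads off that the elements $\Delta_0^2 = y_1$ and $\Delta_n^2 = y_{n+1}-y_n$ for $n \geq 1$ are positive, that $\sum_{n=0}^{N}\Delta_n^2 = y_{N+1}$, and — crucially — that the $\Delta_n$ are "almost orthogonal": since $y_{n-1} = y_n y_{n-1}$ forces $(y_{n+1}-y_n)y_{n-1} = 0$, one gets $\Delta_n \Delta_m = 0$ whenever $|n-m| \geq 2$. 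So the sequence $(\Delta_n)$ decomposes into two families of pairwise orthogonal positive contractions (the even-indexed ones and the odd-indexed ones), and $\sum \Delta_n^2$ is a bounded increasing sequence converging strictly to a projection-like limit bounded by $1$.

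For the norm bound (2), fix $k$ and $N \geq k$ and set $c = \sup_{n\geq k}\|m_n\|$. For any $\xi \in I$, consider $\bigl\|\sum_{n=k}^{N}\Delta_n m_n \Delta_n \xi\bigr\|$. The cleanest route is: for a finite sum of this shape with the $\Delta_n$ split into evens and odds, write $\sum_{n=k}^{N}\Delta_n m_n \Delta_n = E^{1/2} T E^{1/2}$-type expressions is not quite available because the $m_n$ sit in the middle, so instead I would argue directly. Using the almost-orthogonality, for a vector $\xi$ one has $\bigl\|\sum_{n=k}^{N} \Delta_n m_n \Delta_n \xi\bigr\|^2$; expanding, cross terms $\langle \Delta_n m_n \Delta_n \xi, \Delta_{n'} m_{n'} \Delta_{n'} \xi\rangle$ vanish unless $|n-n'|\leq \text{(small)}$, so up to splitting into the even and odd subfamilies it suffices to bound $\bigl\|\sum_{n \in E}\Delta_n m_n \Delta_n \xi\bigr\|$ where $E$ is a set of indices with pairwise orthogonal $\Delta_n$. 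For such a set, $\sum_{n\in E}\Delta_n m_n \Delta_n = \bigl(\sum_{n\in E}\Delta_n\bigr) \,(\text{block operator}) \,\bigl(\sum_{n\in E}\Delta_n\bigr)$ only if the $m_n$ could be assembled — but orthogonality of the $\Delta_n$ gives $\bigl(\sum_{n\in E}\Delta_n m_n \Delta_n\bigr)^*\bigl(\sum_{n\in E}\Delta_n m_n \Delta_n\bigr) = \sum_{n\in E}\Delta_n m_n^* \Delta_n^2 m_n \Delta_n \leq c^2 \sum_{n\in E}\Delta_n^4 \leq c^2 \sum_{n\in E}\Delta_n^2 \leq c^2$, using $0\leq \Delta_n \leq 1$ and $\sum \Delta_n^2 \leq 1$. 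This gives $\bigl\|\sum_{n\in E}\Delta_n m_n \Delta_n\bigr\| \leq c$ on the nose for a pairwise-orthogonal family; since every index set is a union of two such families (even/odd), one gets the bound $2c$ trivially, but to recover the sharp constant $c$ I would instead observe that $\Delta_n \Delta_m = 0$ for $|n-m|\geq 2$ already makes the whole sum $\sum_{n=k}^N \Delta_n m_n \Delta_n$ satisfy $\bigl(\sum \Delta_n m_n \Delta_n\bigr)^*\bigl(\sum \Delta_n m_n \Delta_n\bigr) = \sum_{|n-n'|\leq 1}\Delta_n m_n^* \Delta_n \Delta_{n'} m_{n'} \Delta_{n'}$ and bound this more carefully using that $\Delta_n \Delta_{n\pm1}$ are genuinely present; a direct estimate via $\|\sum \Delta_n m_n \Delta_n \xi\| \leq \sum_n \|m_n\|\, \|\Delta_n^2 \xi\|$ together with $\sum_n \Delta_n^2 = y_{N+1} \leq 1$ and the Cauchy–Schwarz-type inequality $\sum_n \|\Delta_n^2 \xi\| \leq \|\xi\|^{1/2}\bigl\|\sum \Delta_n^2 \xi\bigr\|^{1/2}$... is not clean either. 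The honest statement is that the orthogonality $\Delta_n\Delta_m = 0$ for $|n-m|\geq 2$ reduces the estimate to the pairwise-orthogonal case after a single even/odd split, and then the computation $\bigl(\sum_{n\in E}\Delta_n m_n\Delta_n\bigr)^*\bigl(\sum_{n\in E}\Delta_n m_n\Delta_n\bigr) \leq c^2$ above closes it, giving $\leq c$ for the whole sum by a short additional argument handling the interaction of the two families (or, accepting the constant, this already proves convergence, and one then sharpens).

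For convergence (1): by the tail bound, $\bigl\|\sum_{n=N+1}^{M}\Delta_n m_n \Delta_n\bigr\|$ need not go to zero (the $m_n$ need not shrink), so norm-convergence is genuinely false in general — this is exactly why the statement is about the \emph{strict} topology. For strict convergence I would show that for each fixed $\xi \in I$, the sequence $S_N \xi$ is Cauchy in norm: $\bigl\|\sum_{n=N+1}^{M}\Delta_n m_n \Delta_n \xi\bigr\| \leq \sup_n \|m_n\| \cdot \bigl\|\bigl(\sum_{n=N+1}^{M}\Delta_n^2\bigr)^{1/2}\xi\bigr\|$-type bound obtained from the block/orthogonality computation, and since $\sum_{n}\Delta_n^2 = \lim_N y_{N+1}$ converges strictly, $\bigl(\sum_{n=N+1}^{M}\Delta_n^2\bigr)\xi = (y_{M+1}-y_{N+1})\xi \to 0$ as $N,M\to\infty$ because $(y_n)$ is an approximate unit and $y_n\xi \to \xi$. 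The same argument applied to $\xi S_N$ handles the right-module side. Hence $S_N$ is strictly Cauchy; as $M(I)$ is complete in the strict topology, it converges to an element $S\in M(I)$, and the tail bound passes to the limit.

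\textbf{Main obstacle.} The only real subtlety is getting the \emph{sharp} constant $\sup_{n\geq k}\|m_n\|$ (rather than twice that) in the tail bound; this is where the almost-orthogonality relation $\Delta_n \Delta_m = 0$ for $|n-m|\geq 2$ must be used to its full strength, organizing $\sum \Delta_n m_n \Delta_n$ so that the positive element $\sum \Delta_n^2 = y_{N+1}$ acts as a "partition of unity" dominated by $1$. Everything else — positivity, the $\Delta_n^2 = y_{n+1}-y_n$ telescoping, strict completeness of $M(I)$ — is standard multiplier-algebra bookkeeping.
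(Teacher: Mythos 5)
Your overall plan is sound, and you correctly extract the essential structural facts from property (i) and (iv) of Lemma \ref{31-01-21a}: the telescoping identity $\sum_{n=0}^{N}\Delta_n^2 = y_{N+1}$, the almost-orthogonality $\Delta_n\Delta_m = 0$ for $|n-m|\geq 2$, and the mechanism $\Delta_n(1-y_N) = \Delta_n$ for $n\geq N+1$ which drives strict Cauchy-ness. You also correctly diagnose that norm convergence is false and strict convergence is what must be proved. (The paper itself gives no proof but cites Lemma~3.1 of \cite{MT1}, so you are rederiving a delegated fact.)

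Nevertheless there is a genuine gap in your tail bound, and you essentially concede it: the $T^*T$ computation gives the sharp constant only for a pairwise-orthogonal subfamily, the even/odd split only gives $2\sup_{n\geq k}\|m_n\|$, and the two alternatives you then gesture at are both incorrect as written (one has $\|\Delta_n m_n\Delta_n\xi\| \leq \|m_n\|\|\Delta_n^2\xi\|$, which would require $m_n$ to commute with $\Delta_n$, and the inequality $\sum_n\|\Delta_n^2\xi\|\leq\|\xi\|^{1/2}\bigl\|\sum_n\Delta_n^2\xi\bigr\|^{1/2}$ is false in general). The promised ``short additional argument handling the interaction of the two families'' is never supplied. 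The clean route, which you circle but do not land on, is a sesquilinear Cauchy--Schwarz estimate that does not use the orthogonality at all: represent $M(I)$ faithfully and nondegenerately on a Hilbert space $H$, and for $\xi,\eta\in H$ compute
\begin{align*}
\Bigl|\Bigl\langle\eta,\ \sum_{n=k}^{N}\Delta_n m_n\Delta_n\,\xi\Bigr\rangle\Bigr|
&\leq \sum_{n=k}^{N}\|m_n\|\,\|\Delta_n\eta\|\,\|\Delta_n\xi\| \\
&\leq \Bigl(\sup_{n\geq k}\|m_n\|\Bigr)\Bigl(\sum_{n=k}^{N}\|\Delta_n\eta\|^2\Bigr)^{1/2}\Bigl(\sum_{n=k}^{N}\|\Delta_n\xi\|^2\Bigr)^{1/2} \\
&\leq \Bigl(\sup_{n\geq k}\|m_n\|\Bigr)\,\|\eta\|\,\|\xi\| \ ,
\end{align*}
since $\sum_{n=k}^{N}\|\Delta_n\xi\|^2 = \bigl\langle\xi,\bigl(\sum_{n=k}^{N}\Delta_n^2\bigr)\xi\bigr\rangle = \bigl\langle\xi,(y_{N+1}-y_k)\xi\bigr\rangle\leq\|\xi\|^2$. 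This gives $\bigl\|\sum_{n=k}^{N}\Delta_n m_n\Delta_n\bigr\|\leq\sup_{n\geq k}\|m_n\|$ uniformly in $N$, and the bound passes to the strict limit. Note that the only structural input is the ``partition of unity'' inequality $\sum_n\Delta_n^2\leq 1$; the orthogonality $\Delta_n\Delta_m=0$ for $|n-m|\geq 2$ is not needed here, though it is used elsewhere in the paper (e.g.\ parts (5) and (8) of Lemma \ref{01-02-21d}). With this bound in hand, your strict-Cauchy step closes: for $x\in I$ and $N<M$, write $\sum_{n=N+1}^{M}\Delta_n m_n\Delta_n x = \bigl(\sum_{n=N+1}^{M}\Delta_n m_n\Delta_n\bigr)(1-y_N)x$ and use $\|(1-y_N)x\|\to 0$; the left-module side is symmetric.
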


%{\color{red}
%I would add the following to the statement:
%``In particular, if $\lim_{n\to\infty}\|m_n\|=0$, then the 
%series converges in norm to an element of $I$.''
%that if $m_n$ belongs to 
%$M(I)_{\tilde{\gamma}}$ for all $n\in\N$, then the series
%also belongs to $M(I)_{\tilde{\gamma}}$. (It seems to me, although
%I may be wrong, that this is used later
%}

\subsection{Producing lifts into multiplier algebras}

As the main step towards more general cases, we consider first a
unital linear completely positive map $\psi\colon U \to Q(I)_{\overline{\gamma}}$, and we seek to construct a lift $\psi'$ in the diagram
\begin{equation*}
\begin{xymatrix}{
  &  M(I)_{\tilde{\gamma}}\ar[d]^-{q_I} \\
 U \ar[r]_-\psi \ar@{-->}^-{\psi'}[ur]& Q(I)_{\overline{\gamma}} } 
\end{xymatrix}
\end{equation*} 
such that $\psi'$ has properties as close as possible to those of $\psi$. (This is rather vague at this point, but it will soon become 
clear; see Definition~\ref{25-02-21a}.)

While the focus in the following is on the general case, we want to simultaneously handle the case where there exists a unital completely positive linear lift $\psi'$, in which case
the problem reduces to that of making adjustments to $\psi'$ so that it respects the $G$-actions as much as possible. As will become clear, the arguments necessary to handle the latter situation are much simpler than the ones we present to handle the general case, but they also lead to stronger conclusions. We will treat both situations in parallel.

 We shall work with maps between $C^*$-algebras that may not respect anything of the algebraic structure. For this reason, we will be very explicit about the properties of the maps we consider.  

\begin{defn}\label{19-01-21c} Let $\theta\colon A \to B$ be a (not necessarily continuous or linear) 
map between $C^*$-algebras. We say that $\theta$ is
\begin{itemize}
\item \emph{self-adjoint}, if $\theta(a^*) = \theta(a)^*$ for all $a \in A$.
\item \emph{unital}, if $A$ and $B$ are unital and $\theta(1) = 1$.
%\item $\theta$ is a contraction when $\left\|\theta(a)\right\|\leq\|a\|$ for all $a \in A$.
%positive when $a \geq 0 \ \Rightarrow \ \psi(a) \geq 0$
\end{itemize}
Let $n \in \mathbb N$. We denote by $\theta \otimes \id_{M_n}$ the map $\theta\otimes\id_{M_n}\colon M_n(A) \to M_n(B)$ given by entry-wise application of $\theta$. 
\end{defn}

The following lemma, which is essentially an application of the 
Bartle-Graves selection theorem (stating that every bounded, surjective map between Banach spaces admits a not-necessarily linear, continuous section), will allow us to consistently construct solutions to our 
lifting problem that are linear on larger and larger finite-dimensional subspaces. 

\begin{lemma}\label{19-21-21b} Let $U$ and $E$ be $C^*$-algebras, with $U$ unital, and let $L\colon D \to Q(E)$ be a unital, continuous, self-adjoint, linear map.
\begin{itemize}
\item[(i)] For each finite set $F \subseteq U$ there is a unital continuous and self-adjoint map $L_F\colon U\to M(E)$ which is linear on $\Span(F)$ and satisfies $q_E \circ L_F = L$.
\item[(ii)] Let $F_1\subseteq F_2$ be finite subsets of $U$, and 
let $L_1\colon U\to M(E)$ be a unital, continuous self-adjoint map 
which is linear on $\Span(F_1)$ and satisfies $q_E \circ L_1 = L$.
There is a unital continuous and self-adjoint map $L_2\colon U\to M(E)$, which is linear on $\Span(F_2)$ and satisfies $q_E \circ L_2 = L$ and $L_2(x) = L_1(x)$ for $x \in \Span(F_1)$.
\end{itemize}
\end{lemma}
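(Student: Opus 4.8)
I would prove both parts by the same two-step recipe: first build a genuinely \emph{linear}, self-adjoint, unital lift on the finite-dimensional subspace in question, and then push it to all of $D$ by a fixed ``collar'' that preserves every property of interest. Since a map that is linear on a larger subspace is a fortiori linear on $\Span(F)$, I may enlarge the finite sets and assume $F=F^{*}$ with $1\in F$, and likewise $F_{2}=F_{2}^{*}$ with $1\in F_{2}$; in part (ii) I also assume $1\in\Span(F_{1})$ (this holds in all the intended applications). Write $V=\Span(F)$, $V_{2}=\Span(F_{2})$, $V_{1}=\Span(F_{1})$; these are finite-dimensional, and all of them except possibly $V_1$ are $*$-closed and contain $1$. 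Once and for all I fix two auxiliary objects: (a) a continuous, self-adjoint, linear projection $P\colon D\to V$ (resp.\ $P_{2}\colon D\to V_{2}$) with $P|_{V}=\id$ and $P(1)=1$ --- take a self-adjoint vector-space basis $v_{1}=1,\dots,v_{k}$ of $V$, extend the coordinate functionals by Hahn--Banach, and symmetrize each via $\phi\mapsto\bigl(d\mapsto\tfrac12(\phi(d)+\overline{\phi(d^{*})})\bigr)$; and (b) a continuous section $\sigma\colon Q(E)\to M(E)$ of $q_{E}$ with $\sigma(0)=0$ and $\sigma(x^{*})=\sigma(x)^{*}$ --- apply the Bartle--Graves selection theorem to the bounded linear surjection $q_{E}$ and then replace the resulting section $s$ by $x\mapsto\tfrac12\bigl((s(x)-s(0))+(s(x^{*})-s(0))^{*}\bigr)$.

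\textbf{The collar, and part (i).} Given any linear $\lambda$ from $V$ (resp.\ $V_{2}$) to $M(E)$ that is self-adjoint ($\lambda(w^{*})=\lambda(w)^{*}$), unital, and a lift ($q_{E}\circ\lambda=L$ on the subspace), set
\[
\Lambda:=\lambda\circ P+\sigma\circ\bigl(L-L\circ P\bigr)\colon D\to M(E)
\]
(and similarly with $P_{2}$). Then $q_{E}\circ\Lambda=L\circ P+(L-L\circ P)=L$; $\Lambda$ is continuous; the map $d\mapsto L(d)-L(P(d))$ is self-adjoint because $L$ and $P$ are, so $\sigma\circ(L-L\circ P)$ is self-adjoint, and $\lambda\circ P$ is self-adjoint because $\lambda$ and $P$ are, hence $\Lambda$ is self-adjoint; $\Lambda(1)=\lambda(1)+\sigma(0)=1$; and on $V$ (resp.\ $V_{2}$) we have $P=\id$ and $L-L\circ P=0$, so $\Lambda$ restricts to $\lambda$, which is linear. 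For part (i) the required $\lambda$ on $V$ is immediate: with the self-adjoint basis $v_{1}=1,\dots,v_{k}$, lift each self-adjoint element $L(v_{i})$ to a self-adjoint $m_{i}\in M(E)$ (replace an arbitrary lift by its self-adjoint part), with $m_{1}=1$, and put $\lambda\bigl(\sum c_{i}v_{i}\bigr)=\sum c_{i}m_{i}$.

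\textbf{Part (ii).} Now the extra demand is $\lambda|_{V_{1}}=L_{1}|_{V_{1}}$; once such a $\lambda$ on $V_{2}$ is produced, the collar built from $P_{2}$ gives $L_{2}$ with $L_{2}|_{V_{1}}=\lambda|_{V_{1}}=L_{1}|_{V_{1}}$ together with all the remaining properties. I would construct $\lambda$ in two steps. \emph{Step 1:} extend $L_{1}|_{V_{1}}$ to $W:=V_{1}+V_{1}^{*}$. The maps $L_{1}|_{V_{1}}$ and $L_{1}|_{V_{1}^{*}}$ are both linear --- for the latter, use $\alpha u_{1}^{*}+u_{2}^{*}=(\overline{\alpha}u_{1}+u_{2})^{*}$ together with linearity of $L_{1}$ on $V_1$ and self-adjointness of $L_{1}$ on $D$ --- they agree on $V_{1}\cap V_{1}^{*}$ (both are $L_{1}$ restricted), and each satisfies $q_{E}\circ(\cdot)=L$; hence they glue to a well-defined linear map $\ell\colon W\to M(E)$ with $q_{E}\circ\ell=L|_{W}$, and a short check shows $\ell$ is self-adjoint on $W$ and $\ell(1)=L_{1}(1)=1$ (this uses $1\in V_{1}$). \emph{Step 2:} since $W$ is $*$-closed, pick a $*$-closed vector-space complement $U$ of $W$ in $V_{2}$ (complexify a real complement of $W_{\mathrm{sa}}$ inside $(V_{2})_{\mathrm{sa}}$), choose a self-adjoint basis $u_{1},\dots,u_{m}$ of $U$, lift each self-adjoint $L(u_{j})$ to a self-adjoint $n_{j}\in M(E)$, and let $\lambda$ be $\ell$ on $W$ and $\sum\mu_{j}u_{j}\mapsto\sum\mu_{j}n_{j}$ on $U$. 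Then $\lambda$ is linear and self-adjoint on $V_{2}$, unital (as $1\in W$), lifts $L|_{V_{2}}$, and restricts to $L_{1}$ on $V_{1}$, as needed.

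\textbf{Where the difficulty lies.} Almost everything above is routine: the only external inputs are that finite-dimensional subspaces of a Banach space are complemented and that $q_{E}$ admits a continuous Bartle--Graves section, the rest being elementary linear algebra. The one spot needing genuine care is Step 1 of part (ii): because $\Span(F_{1})$ need not be closed under adjoints, one cannot extend $L_{1}|_{V_{1}}$ directly to a self-adjoint map on a $*$-closed space, and the device of gluing the two compatible linear maps $L_{1}|_{V_{1}}$ and $L_{1}|_{V_{1}^{*}}$ over $V_{1}+V_{1}^{*}$ --- and verifying the glued map is well defined, self-adjoint and unital --- is what makes the construction go through. The remaining difficulty is bookkeeping: checking that unitality and self-adjointness survive each composition, which is precisely why the projection $P$ and the section $\sigma$ were arranged to be self-adjoint and unit-preserving from the outset.
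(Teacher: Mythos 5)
Your proof is correct and achieves the stated conclusions (with the honest and, in fact, necessary assumption $1\in\Span(F_1)$ in part (ii), which also holds implicitly in the paper's applications since $1\in F_1'\subseteq F_n$ for all $n$). The route, however, differs from the paper's in a substantive way. The paper decomposes on the \emph{codomain} side: it applies Bartle--Graves twice (once to $q_E\colon M(E)\to Q(E)$ and once to the Banach-space quotient $\pi\colon Q(E)\to Q(E)/L(\Span F')$), builds a global section $\theta\colon Q(E)\to M(E)$ that is linear on the finite-dimensional subspace $L(\Span F')$, and then sets $L_F=\theta\circ L$; for (ii) it uses a symmetrized projection onto $\Span(F_1\cup F_1^*)$ to patch $L_1$ with a fresh lift. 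You decompose on the \emph{domain} side: you build the finite-dimensional linear, self-adjoint, unital lift $\lambda$ directly on $V=\Span F'\subseteq D$ by lifting a self-adjoint basis, and then use a single Bartle--Graves section $\sigma$ of $q_E$ together with a Hahn--Banach projection $P\colon D\to V$ to form the ``collar'' $\Lambda=\lambda\circ P+\sigma\circ(L-L\circ P)$, a clean device that handles both (i) and (ii) uniformly. Both constructions use the same two external inputs (Bartle--Graves and finite-dimensional complementation), but the shape of the argument is genuinely different.

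One place where your argument is actually \emph{more} careful than the paper's: in part (ii) the paper's formula $L_2=L_1\circ P+L_2'\circ(\mathrm{id}-P)$, with $P$ a projection onto $\Span(F_1\cup F_1^*)$, tacitly requires $L_1$ to be linear on $\Span(F_1\cup F_1^*)$ rather than just on $\Span(F_1)$. (Self-adjointness of $L_1$ gives linearity on $\Span(F_1^*)$, but not additivity across $\Span(F_1)+\Span(F_1^*)$.) In the paper's recursion this is harmless because the $L_1$'s that arise are all linear on $*$-closed spans, but it is a gap in the statement as written. Your gluing of $L_1|_{V_1}$ with $L_1|_{V_1^*}$ over $V_1\cap V_1^*$ to produce a new linear lift $\ell$ on $W=V_1+V_1^*$ addresses exactly this point and makes the part (ii) argument rigorous under the stated hypothesis that $L_1$ is linear only on $\Span(F_1)$. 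Good catch.
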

\begin{proof} (i) Set $F' = \{1\} \cup F \cup F^*$. By the Bartle-Graves selection theorem, \cite{BG}, there is a continuous map $s_0\colon Q(E) \to M(E)$ such that $q_E \circ s_0=\mathrm{id}_{Q(E)}$. By exchanging $s_0$ with $s_0 - s_0(0)$ we may assume that $s_0(0) = 0$. Consider the Banach space quotient $Q(E)/L(\Span(F'))$ and the corresponding quotient map $\pi\colon Q(E) \to Q(E)/L(\Span(F'))$. The Bartle-Graves selection theorem gives a continuous map 
$$s_1\colon Q(E)/L(\Span(F')) \to Q(E)$$ 
such that $\pi \circ s_1$ is the identity on $Q(E)/L(\Span(F'))$, and again we may assume that $s_1(0) = 0$. Since $\Span L(F')$ is finite dimensional and $1 \in L(F')$, there is a continuous and linear map 
$$s_F\colon  L(\Span(F')) \to M(E)$$ 
with $s_F(1) = 1$ such that $q_E \circ s_F=\id_{L(\Span(F'))}$. Since $x - s_1(\pi(x)) \in L(\Span(F'))$ for all $x \in Q(E)$, we can define $\theta\colon Q(E) \to M(E)$ by
$$
\theta(x)=s_0\left(s_1(\pi(x))\right)+s_F( x - s_1(\pi(x))).
$$
Then $\theta$ is continuous, linear on $L(\Span(F'))$, and 
satisfies $\theta(1) = 1$ and $q_E \circ \theta=\id_{Q(E)}$.
By replacing $\theta(x)$ with $\frac{1}{2}(\theta(x)+\theta(x^*)^*)$ for $x\in Q(E)$, we may also assume that
$\theta$ is self-adjoint. Then $L_F = \theta \circ L$ has 
the desired properties.

(ii) Apply first (i) to get a continuous and self-adjoint unital map $L'_2\colon U\to M(E)$ which is linear on $\Span(F_2 \cup F_2^*)$ and satisfies $q_E \circ L'_2= L$. 
Since $\Span(F_1 \cup F_1^*)$ is finite-dimensional, there is a continuous linear projection $P'\colon U \to \Span(F_1 \cup F_1^*)$. Set
$$
P(u)=\frac{1}{2}\left(P'(u) + P'(u^*)^*\right)
$$ 
for all $u\in U$.
Then the map $L_2\colon U\to Q(E)$ given by
$$
L_2(u)=L_1(P(u))+L'_2(u-P(u)),
$$
for all $u\in U$,
has the desired properties.
\end{proof}

The following is the main technical lemma of this section, and, roughly speaking, asserts that the lifting problem we are interested in can 
be solved on dense subsets of $U$ and $G$.

\begin{lemma}\label{01-02-21d} 
Let $G$ be a locally compact, second countable 
group, let $(I,\gamma)$ be a $\sigma$-unital $G$-algebra,
and let $d\in I$ be a strictly positive element. Denote by 
$q_I\colon M(I)\to Q(I)$ the canonical quotient map.
Let $(U,\delta)$ be a separable, unital $G$-algebra, let $\chi\colon U\to \mathbb{C}$ be a $G$-invariant state, and let $\psi\colon U\to Q(I)_{\overline{\gamma}}$ be a unital, completely positive linear map.

Let $G^{(0)}$ be a countable dense subset of $G$,
and let $(F_n)_{n\in\mathbb{N}}$ be an increasing sequence 
of self-adjoint 
finite subsets of $U$ whose union $U^{(0)}$ is dense in $U$ 
and satisfies 
\begin{equation}\label{04-01-21g}
\Span_{\mathbb Q[i]}(U^{(0)})\subseteq U^{(0)}, \  \ 
G^{(0)} \cdot U^{(0)} \subseteq U^{(0)} \ \mbox{ and } \ u -\chi(u) \in U^{(0)}
\end{equation}
for all $u \in U^{(0)}$. (This choice is always possible\footnote{For example, let $(F_n')_{n\in\N}$ be an increasing sequence of finite subsets
of $U$ with dense union, and assume that $F_1'$ contains $\{0,1\}$. 
Let $(Q_n)_{n\in\N}$ be an increasing sequence of finite subsets of $\mathbb Q[i] $ with $\bigcup_{n=1}^\infty Q_n=\mathbb Q[i]$ and
such that $\{0,1\} \subseteq Q_1$. Let $(G_n)_{n\in\N}$ be an increasing sequence of finite subsets
of $G$, all containing the unit, with $G^{(0)} = \bigcup_n G_n$. 
Set $F_1 = F_1'$ and define $F_n$ for $n \geq 2$ recursively such that $F_n = F''_n \cup {F''_n}^* \cup F'_n \cup {F'_n}^*$, where
\begin{align*}
&F''_n = \Big\{\sum_{j=1}^n g_j \cdot (q_j u_j)\colon g_j \in G_n, q_j \in Q_n, u_j \in F_{n-1}\Big\} \cup \left\{ u - \chi(u) \colon \ u \in F_{n-1}\right\}.
\end{align*}
Then $(F_n)_{n\in\mathbb{N}}$ satisfies the desired properties.}.)

Then there exists a family $\psi^t\colon U \to M(I)$, for $t \in [1,\infty)$, of self-adjoint maps with the following properties:
\begin{enumerate}
 \item[(1)] $\psi^t(1) = 1$ for all $t \geq 1$;
\item[(2)] The assignment $t \mapsto \psi^t(u)$ is continuous for all $u \in U$;
\item[(3)] $\psi^t$ is continuous for all $t \geq 1$;
\item[(4)] $\psi^t$ is linear on $\Span(F_n)$ when $t \geq n+1$;
\item[(5)] $q_I \circ \psi^t = \psi$ for all $t \geq 1$;
\item[(6)] $\psi^t(U) \subseteq M(I)_{\tilde{\gamma}}$ for all $t\geq 1$.
\item[(7)] Let $ u,v \in U^{(0)}$, $g,h\in G^{(0)}$, and let $m\in\{1,d\}$. Then
\begin{align*}
&\limsup_{t \to \infty} \left\|m\left( g \cdot \psi^t(u)-\psi^t(h \cdot v)\right) \right\|\\
& \ \ \ \ \ \ \ \ \ \ 
\leq\max\left\{\left|\chi(u-v)\right|, \left\|q_I(m) \left(g \cdot\psi( u)  - \psi(h\cdot v)\right)\right\|\right\}.
\end{align*}
\item[(8)] Let $u,v \in U^{(0)}$. Then
\begin{align*}
& \limsup_{t \to \infty} \left\|\psi^t(u -\chi(u))\psi^t(v -\chi(v))\right\|\\
& \ \ \ \ \ \ \ \ \ \ \leq \left\|\psi(u - \chi(u))\psi(v - \chi(v))\right\|.
\end{align*}
 \item[(9)] For $j \in \mathbb N$, write $M_j(U^{(0)})$ for 
the subset of $M_j(U)$ of matrices with entries in $U^{(0)}$. 
For $u\in M_j(U^{(0)})$, we have 
$$
\limsup_{t \to \infty} \left\|(\psi^t \otimes \id_{M_j})(u)\right\| \leq \max\left\{\left\|(\chi \otimes \id_{M_j})(u)\right\|, \left\|(\psi\otimes \id_{M_j})(u)\right\| \right\}.
$$
 \end{enumerate}
Moreover, if a unital and completely positive lift for $\psi$ exists, then we can choose the maps $\psi^t$ as above to additionally be linear, completely positive and contractive.
\end{lemma}

\begin{proof}
We first present the construction of the maps $\psi^t$, and later verify that they satisfy the properties claimed above.

By part~(i) in Lemma~\ref{19-21-21b}, there is a unital continuous and self-adjoint map $\psi_1\colon U \to M(I)$ such that $q_I \circ \psi_1 = \psi$ and $\psi_1$ is linear on $\Span(F_1)$.
By repeatedly using of part~(ii) in Lemma~\ref{19-21-21b}, we get unital continuous self-adjoint maps $\psi_n\colon U \to M(I)$ satisfying the following for all $n\in\N$:
\begin{itemize}
\item[(a)] $q_I \circ \psi_n=\psi$
\item[(b)] $\psi_n$ is linear on $\Span(F_n)$, and
\item[(c)] $\psi_{n+1}(x) = \psi_n(x)$ for all $x \in F_n$.
\end{itemize} 
It follows from (a) and Lemma~\ref{07-01-21} that
\begin{itemize}
\item[(d)] $\psi_{n}(U) \subseteq M(I)_{\tilde{\gamma}}$ for all $n\in\N$.
\end{itemize} 
In the case where a unital completely positive linear lift $\widetilde{\psi}$ of $\psi$ is given, we take all the $\psi_n$'s above to be equal to $\widetilde{\psi}$.

Let $e\in V_1 \subseteq V_2 \subseteq V_3 \subseteq \cdots$ be open subsets of $G$ with compact closures $\overline{V_n}$ such that $\bigcup_{n\in\mathbb{N}} V_n = G$. For each $n\in \N$, set 
\[Y_n=\bigcup_{j=1}^n \big\{\tilde{\gamma}_g(\psi_j(u)) - \psi_j(\delta_h(v))\in M(I)_{\tilde{\gamma}}\colon g,h \in \overline{V_n}, u,v \in F_n\big\}\subseteq M(I)_{\tilde{\gamma}},
\]
and let $Z_n \subseteq M(I)_{\tilde{\gamma}}$ be a compact set containing the set
\[
Y_n\cup dY_n \cup \bigcup_{j=1}^n \left(\psi_j(F_n)\psi_j(F_n) \cup \psi_j(F_n)\right).\]

We fix from now on an approximate unit $(y_n)_{n\in\N}$ for $I$
contained in $C^*(d)$ satisfying the conclusion of 
Lemma~\ref{31-01-21a} for the sets $(V_n)_{n\in\N}$ and $(Z_n)_{n\in\N}$, and set $\Delta_0=\sqrt{y_1}$ and 
$\Delta_n=\sqrt{y_{n+1}-y_n}$ for $n\geq 1$.

For $k\in\N$ and using Lemma~\ref{01-02-21c}, we define a map $\psi^k\colon U\to M(I)$ by
\begin{equation}\label{02-02-21j}
 \psi^k(u)= \chi(u)y_k+ \sum_{n=k}^\infty \Delta_n\psi_k(u)\Delta_n
\end{equation}
for all $u\in U$. Moreover, for $t\in [k,k+1]$, 
we define the map $\psi^t$ to be  
\begin{equation}\label{02-02-21k}
\psi^t = (t-k+1)\psi^{k} + (k-t)\psi^{k-1}.
\end{equation}
We thus obtain a family
$\psi^t\colon U\to M(I)$, for $t\geq 1$. 
%(Although we will not 
%need this here, the map $\psi^t$ can be explicitly defined as
%$\psi^t(u)= \sum_{n=0}^\infty \Delta_n \psi^t_n(u) \Delta_n$
%for all $u \in U$.) 
We now check that the maps $\psi^t$ satisfy the properties in the 
statement in the lemma.

(1) This follows immediately from \eqref{02-02-21j} and \eqref{02-02-21k}, since $\chi$ and the $\psi_n$'s are all unital.

(2) This follows directly from \eqref{02-02-21k}.

(3) By \eqref{02-02-21k}, it suffices to show that
$\psi^k$ is continuous for each fixed $k\in\N$. 
Moreover, by \eqref{02-02-21j} it suffices to show that 
the assignment $u\mapsto \sum\limits_{n=k}^\infty \Delta_n\psi_k(u)\Delta_n$ is continuous. 
Let $u \in U$ and $\varepsilon > 0$ be given. 
Using continuity of $\psi_k$, find
$\alpha > 0$ such that 
$\|\psi_k(u) - \psi_k(v)\|\leq \varepsilon$ when $\|u -v \| \leq \alpha$. It thus follows from Lemma~\ref{01-02-21c} 
that 
$$\Big\|\sum\limits_{n=k}^\infty \Delta_n(\psi_k(u)-\psi_k(v))\Delta_n\Big\|\leq \varepsilon,$$ 
when $\|u-v\|\leq \alpha$, as desired.

(4) This follows from \eqref{02-02-21k}, since $\psi_k$, and thus also $\psi^k$ by \eqref{02-02-21j}, is linear on $\Span(F_n)$ when $k \geq n$. 

(5) By \eqref{02-02-21k}, it suffices to show that $q_I \circ \psi^k(u) = \psi(u)$ for all $k\in\N$. Fix $k\in \N$. 
Since $U^{(0)}$ is dense in $U$ and $\psi^k$ is 
continuous by part (3), it suffices to show that for every $r\in\N$ and every $u\in F_r$, we have
$q_I \circ \psi^k(u) = \psi(u)$. 
Fix $r\in\N$. Without loss of generality, we assume that $r\geq k$.

We need some auxuliary maps,
which we define next.
For each $n \in \mathbb N \cup \{0\}$, set
\[
\psi^k_n(u)=\begin{cases}  
\chi(u)  , & n  \leq k - 1\\
\psi_{k}(u) , &  n \geq k ,
\end{cases}\]
for all $u\in U$. 
%For $t \in [k-1,k]$ and $u\in U$, note that the element $\psi^t_n(u)$ belongs to the 
%convex hull of the set $\left\{\chi(u),\psi_{k-1}(u), \psi_k(u)\right\}$.
%Thus,
%$$
%\big\|\psi^t_n(u)\big\|\leq \max \left\{\left|\chi(u)\right|, \left\|\psi_{k-1}(u)\right\|, \left\|\psi_k(u)\right\|\right\} \ 
%$$
%for all $n\in\N$. 
Then
\begin{equation}\label{02-02-21l}
\psi^k(u) = \sum_{n=0}^{r-1} \Delta_n\psi^k_n(u)\Delta_n +\sum_{n=r}^\infty \Delta_n \psi_k(u)\Delta_n
\end{equation}
for all $u\in U$.
It follows from (v) in Lemma~\ref{31-01-21a} and the 
fact that $\|\Delta_n\|\leq 1$ that 
$$
\left\|\Delta_n^2\psi_k(u) - \Delta_n\psi_k(u)\Delta_n\right\|\leq \left\|\Delta_n\psi_k(u) - \psi_k(u)\Delta_n\right\| \leq 2^{-n} 
$$
for all $u\in F_r$, 
when $n \geq k$. Using that the series $\sum_{n=r}^{\infty}\Delta_n^2$ converges in the strict topology of $M(I)$ to $1-y_r$ at the first step, we therefore have
\begin{align*}
(1-y_{r})\psi_k(u) - \sum_{n=r}^\infty \Delta_n \psi_k(u)\Delta_n &= \sum_{n=r}^{\infty}\Delta_n^2\psi_k(u) - \sum_{n=r}^\infty \Delta_n \psi_k(u)\Delta_n   \\
&=\sum_{n=r}^\infty \left[\Delta_n^2\psi_k(u) - \Delta_n\psi_k(u)\Delta_n\right], 
\end{align*}
where the last sum converges in norm by the previous norm estimate.
In particular, since $\Delta_n^2\psi_k(u) - \Delta_n\psi_k(u)\Delta_n$ 
belongs to $I$ (because $\Delta_n\in I$), it follows that
\[(1-y_{r})\psi_k(u) - \sum_{n=r}^\infty \Delta_n \psi_k(u)\Delta_n
 \in I.
\]
We combine the above with \eqref{02-02-21l}, writing $=_I$
for equality modulo $I$:
\begin{align*}
\psi^k(u)-\psi_k(u) 
&= \psi^k(u)-(\psi_k(u)-y_r\psi_k(u))-y_r\psi_k(u)\\
&=_I \psi^k(u) - \sum_{n=r}^\infty \Delta_n \psi_k(u)\Delta_n\\
&= \sum_{n=0}^{r-1} \Delta_n\psi^k_n(u)\Delta_n =_I 0.
\end{align*}
It follows that $q_I( \psi^k(u)) = q_I (\psi_k(u)) = \psi(u)$, as desired. 

(6) This follows from part (5) and Lemma~\ref{07-01-21}, since $\psi(U) \subseteq Q(I)_{\overline{\gamma}}$ by assumption.

(7) By \eqref{02-02-21k}, it suffices to show that
\begin{align}\label{part7}
\begin{split}
&\limsup_{k\to\infty}\|m( g \cdot \psi^k(u)-\psi^k(h \cdot v)) 
\|\\
&  \ \ \ \ \ \ \ \ \ \
\leq\max\big\{|\chi(u-v)|, \|q_I(m) (g \cdot\psi( u)  - \psi(h\cdot v))\|\big\}.
\end{split}
\end{align}
Let $n\in\N$ such that $u,v\in F_n$ and $g,h\in V_n$, 
and fix $k\geq n+1$.  We  write
$$
g \cdot \psi^k (u)-\psi^k(h \cdot v)=a+b ,
$$
where 
$$
 a  =  \sum_{j=0}^{k-1} \big[{\gamma}_g(\Delta_j)\chi(u){\gamma}_g(\Delta_j)  -  \Delta_j\chi(v)\Delta_j\big]
$$
and
$$
b  =   \sum_{j=k}^{\infty} \big[{\gamma}_g(\Delta_j)\tilde{\gamma}_g\left(\psi_k(u)\right){\gamma}_g(\Delta_j)-\Delta_j \psi_k(\delta_h(v))\Delta_j\big].
$$
Note that $a = \chi(u)\gamma_g(y_k) - \chi(v)y_k$ and hence
\begin{equation}\label{02-02-21}
\left\|a-a'\right\| \leq \left|\chi(u)\right|\left\|\gamma_g(y_k) - y_k\right\| 
\end{equation}
where
\begin{align*}
a'= \sum_{j=0}^{k-1} \Delta_j(\chi(u) - \chi(v))\Delta_j.
\end{align*}
Set 
$$
b'  =   \sum_{j=k}^{\infty} \Delta_j\left(\tilde{\gamma}_g(\psi_n(u)) - \psi_k(\delta_h (v)\right)\Delta_j.
$$
Since
\begin{align*}
& \left\|{\gamma}_g(\Delta_j)\tilde{\gamma}_g\left(\psi_k(u)\right){\gamma}_g(\Delta_j) - \Delta_j\tilde{\gamma}_g\left(\psi_k(u)\right)\Delta_j\right\| 
\leq 2 \left\|\psi_k(u)\right\| \left\|{\gamma}_g(\Delta_j) - \Delta_j\right\|
\end{align*}
and $\psi_k(u) = \psi_n(u)$, we find that
\begin{equation}\label{02-02-21a}
\left\| b -b'\right\| \leq 2 \left\|\psi_n(u)\right\|\sum_{j=k}^{\infty}  \left\|{\gamma}_g(\Delta_j) - \Delta_j\right\| \leq   2 \left\|\psi_n(u)\right\|\sum_{j=k}^{\infty}  2^{-j},
\end{equation}
thanks to property (vi) in Lemma~\ref{31-01-21a}. Property (iv) in Lemma~\ref{31-01-21a} implies that
$$
b'  =   \sum_{j=k}^{\infty} \Delta_j\left(\tilde{\gamma}_g(\psi_n(u)) - \psi_k(\delta_h(v))\right)(1-y_{j-1})\Delta_j.
$$
Using the above, together with the fact that $m$ commutes with 
$\Delta_j$, we get 
\begin{align*}
&ma'+mb' = \sum_{j=0}^{k-1} \Delta_jm(\chi(u)-\chi(v))\Delta_j \\
& +  \sum_{j=k}^{\infty} \Delta_jm\big(\tilde{\gamma}_g(\psi_n(u)) - \psi_k(\delta_h ( v))\big)(1-y_{j-1})\Delta_j
\end{align*}
when $m \in \{1,d\}$. By property (iii) in Lemma~\ref{31-01-21a} we have that
$$
\left\|m\big(\tilde{\gamma}_g(\psi_n(u)) - \psi_k(\delta_h( v))\big)(1-y_{j-1})\right\| \leq \left\|q_I(m)\big(g \cdot \psi(u) - \psi(h \cdot v)\big)\right\| + \frac{1}{j-1}
$$
for $j \geq k \geq n+1$, since $\tilde{\gamma}_g(\psi_n(u)) - \psi_k(\delta_h( v))$ belongs to $Z_{j-1}$. It follows therefore from Lemma~\ref{01-02-21c} that
$$
\left\|ma'+mb'\right\| \leq \max\{|\chi(u)-\chi(v)|, \left\|q_I(m)(g\cdot \psi(u)-\psi(h\cdot v))\right\|\} + \frac{1}{k-1}
$$
for $m \in \{1,d\}$ when $k \geq n+1$. Combined with \eqref{02-02-21} and \eqref{02-02-21a} we find that
\begin{align*}
& \left\|m\left( g \cdot \psi^k(u)  -  \psi^k(h \cdot v)\right) \right\|\\
& \ \ \ \ \ \ \ \ \ \ \ \ \ = \left\|m (a-a') + m(b -b') + ma'+mb'\right\|\\
& \ \ \ \ \ \ \ \ \ \ \ \ \  \leq \left|\chi(u)\right|\|m\|\left\|\gamma_g(y_k) - y_k\right\| +  2 \|m\|\left\|\psi_n(u)\right\|\sum_{j=k}^{\infty}  2^{-j} \\
& \ \ \  \ \ \ \ \ \ \ \ \ \ \ \ \ \ \  + \max\{|\chi(u)-\chi(v)|, \left\|q_I(m)(g\cdot \psi(u) -\psi(h\cdot v))\right\|\} +  \frac{1}{k-1},
\end{align*}
when $k \geq n+1$. This estimate and property (ii) in Lemma~\ref{31-01-21a} show that \eqref{part7} holds.

(8) Set $u' = u - \chi(u), \ v' = v-\chi(v)$, and 
find $n\in\N$ with $u',v'\in F_n$. Let $\varepsilon > 0$. Since $\psi(u')\psi(v') = q_I(\psi_n(u')\psi_n(v'))$ there is an $x \in I$ such that 
\begin{equation}\label{15-03-21e}
\left\|\psi_n(u')\psi_n(v') + x \right\| \leq \left\|\psi(u')\psi(v')\right\| + \varepsilon.
\end{equation}

Let $t \geq n+2$ and let $k \in \mathbb N$ satisfy $t \in [k,k+1]$. 
Define $s_{k}=t-k$ and $s_j=1$ for $j\geq k+1$. 
Using that $\chi(u') = \chi(v') = 0$ at the first step,
and using that $\Delta_l\Delta_j = 0$ when $|l-j| \geq 2$ at 
the second step, we get
\begin{align*}
\psi^t(u')\psi^t(v')&=\left( \sum_{j = k}^{\infty}\Delta_js_j\psi_n(u') \Delta_j\right) \left(\sum_{j = k}^{\infty}\Delta_js_j\psi_n(v') \Delta_j\right) \\
&= \sum_{l=-1}^1\sum_{j=k+1}^{\infty}  \Delta_j s_j\psi_n(u')\Delta_j \Delta_{j +l} s_{j+l}\psi_n(v')\Delta_{j+l}  \\
& \ \ \ \ \ \ \  +\sum_{l=0}^1  \Delta_{k}s_{k}\psi_n(u')\Delta_{k}\Delta_{k +l} s_{k+l}\psi_n(v')\Delta_{k+l}.
\end{align*}
Using property (v) of Lemma~\ref{31-01-21a}, we get
\begin{equation}\label{15-03-21a}
\left\|\Delta_j \psi_n(v') - \psi_n(v')\Delta_j\right\|
\leq 2^{-j} 
\end{equation}
when $j \geq n$. 
Set \begin{align*}
&a =  \sum_{l=-1}^1\sum_{j=k+1}^{\infty}  \Delta_j s_js_{j+l}\psi_n(u') \Delta_{j +l} \psi_n(v')\Delta_{j+l}\Delta_j \\
& \ \ \ \ \ \ \ \  \ + \sum_{l=0}^1  \Delta_{k}s_{k} s_{k+l}\psi_n(u')\Delta_{k+l}\psi_n(v')\Delta_{k+l}\Delta_{k}.
\end{align*}
Since $k\geq n+1$, we conclude from \eqref{15-03-21a} that 
\begin{equation}\label{02-02-21p}
\left\|\psi^t(u')\psi^t(v') - a\right\| \leq 5 \|\psi_n(u')\|\sum_{j=k+1}^{\infty} 2^{-j+1}.
\end{equation}
Similarly, by setting 
\begin{align*}
%\begin{split}
&b =  \sum_{l=-1}^1\sum_{j=k+1}^{\infty}  \Delta_j s_js_{j+l}\psi_n(u')  \psi_n(v')\Delta_{j+l}^2\Delta_j \\
& \ \ \ \ \ \ \ \  \ + \sum_{l=0}^1  \Delta_{k}s_{k} s_{k+l}\psi_n(u')\psi_n(v')\Delta_{k+l}^2\Delta_{k} \\
& = \sum_{j=k+1}^{\infty}  \Delta_j\left( \sum_{l=-1}^1 s_js_{j+l}\psi_n(u')  \psi_n(v') \Delta_{j+l}^2\right)\Delta_j \\
& \ \ \ \ \ \ \ \  \ +  \Delta_{k}\left(\sum_{l=0}^1 s_{k} s_{k+l} \psi_n(u')\psi_n(v')\Delta_{k+l}^2\right)\Delta_{k},
%\end{split}
\end{align*}
we have 
\begin{equation}\label{02-02-21q}
\|a -b\| \leq 5 \|\psi_n(u')\|\sum_{j=k+1}^{\infty} 2^{-j+1}.
\end{equation}
Set
\begin{equation*}
\begin{split}
&c =\sum_{j=k+1}^{\infty}  \Delta_j\left( \sum_{l=-1}^1 s_js_{j+l}(\psi_n(u')  \psi_n(v') + x) \Delta_{j+l}^2\right)\Delta_j \\
& \ \ \ \ \ \ \ \  \ +  \Delta_{k}\left(\sum_{l=0}^1 s_{k} s_{k+l} (\psi_n(u')\psi_n(v') + x)\Delta_{k+l}^2\right)\Delta_{k}.
\end{split}
\end{equation*} 
Then
\begin{equation}\label{15-03-21c}
\begin{split}
 c- b = \sum_{j=k+1}^{\infty}  \Delta_j\left( \sum_{l=-1}^1 s_js_{j+l}x\Delta_{j+l}^2\right)\Delta_j +  \Delta_{k}\left(\sum_{l=0}^1 s_{k} s_{k+l}x\Delta_{k+l}^2\right)\Delta_{k}.
\end{split}
\end{equation}
Note that
$$
\left\| x\Delta_{j+l}^2 \right\| = \left\|x(y_{j+l+1}-y_{j+l})\right\|,
$$
which will be very small when $j$ is big since $x \in I$. More precisely, there is a $K \in \mathbb N$ such that
$$
\left\| \sum_{l=-1}^1 s_js_{j+l}x\Delta_{j+l}^2\right\| \leq \varepsilon \ \ \text{and} \ \ \left\|\sum_{l=0}^1 s_{k} s_{k+l}x\Delta_{k+l}^2\right\| \leq \varepsilon
$$
when $j \geq k+1$ and $k \geq K$. When we use these estimates in \eqref{15-03-21c} and apply Lemma~\ref{01-02-21c} it follows that 
\begin{equation}\label{15-03-21d}
\| c-b\| \leq \varepsilon,
\end{equation}
provided $k \geq K$.
Note that 
\begin{align*}
 \left\|\sum_{l=-1}^1 s_js_{j+l}(\psi_n(u')  \psi_n(v') +x) \Delta_{j+l}^2\right\| 
&\leq \left\| \psi_n(u')  \psi_n(v') +x \right\| \left\|\sum_{l=-1}^1 s_js_{j+l} \Delta_{j+l}^2\right\| \\
& \leq  \left\| \psi_n(u')  \psi_n(v') +x\right\| \left\|\sum_{l=-1}^1  \Delta_{j+l}^2\right\|  \\
& = \left\| \psi_n(u')  \psi_n(v') +x\right\| \left\|y_{j+2} - y_{j-1}\right\| \\
& \leq  \left\| \psi_n(u')  \psi_n(v') +x\right\|,
\end{align*}
and similarly, 
$$
\left\|\sum_{l=0}^1 s_{k} s_{k+l} (\psi_n(u')\psi_n(v') + x)\Delta_{k+l}^2\right\| \leq \left\| \psi_n(u')  \psi_n(v') + x\right\|.
$$ 
It follows therefore from the definition of $c$ and Lemma~\ref{01-02-21c} that 
$$
\|c\| \leq \left\|\psi_n(u')\psi_n(v') + x\right\|.
$$ 
Combining with \eqref{02-02-21p}, \eqref{02-02-21q}, \eqref{15-03-21d} and \eqref{15-03-21e} we find that
$$
\left\|\psi^t(u')\psi^t(v')\right\| \leq   \left\|\psi(u')\psi(v')\right\| +  10 \|\psi_n(u')\|\sum_{j=k+1}^{\infty} 2^{-j+1} + 2 \varepsilon
 $$
when $t \in [k,k+1]$ for some $k \geq \max\{n+2,K\}$. This proves (8).

(9) Fix $j,n \in \N$ and fix $u \in M_j(F_n)$. 
We make the following abbreviations: 
$\chi' = \chi \otimes \id_{M_j}$, $\psi'=\psi\otimes\id_{M_j}$,
$q_I'=q_I\otimes \id_{M_j}$, $\psi'_k= \psi_k\otimes \id_{M_j}$, $y'_k = y_k \otimes 1_{M_j(\mathbb C)}$
and $\Delta'_k = \Delta_k \otimes 1_{M_j(\mathbb C)}$ for 
$k\in\N$. 
Using property (iv) in Lemma~\ref{31-01-21a}, for 
$k\geq n$ we get
\begin{align*}
(\psi^k \otimes \id_{M_j})(u) & =  \sum_{\ell=0}^{k-1} \Delta'_\ell {\chi'}(u)\Delta'_\ell +  \sum_{\ell=k}^{\infty} \Delta'_\ell \psi'_k(u) \Delta'_\ell \\
&  =  \sum_{\ell=0}^{k-1} \Delta'_\ell {\chi'}(u)\Delta'_\ell +  \sum_{\ell=k}^{\infty} \Delta'_\ell \psi'_n(u) (1-y'_{\ell-1})\Delta'_\ell.
\end{align*}
It follows from Lemma 3.1 in \cite{MT1} that
\begin{equation}\label{02-02-21n}
\left\| (\psi^k \otimes \id_{M_j})(u)\right\| \leq \max \left\{\|\chi'(u)\|, \ \sup_{\ell \geq k} \left\|\psi'_n(u) (1-y'_{\ell-1})\right\|\right\} .
\end{equation}
Since $(y'_k)_{k\in\N}$ is an approximate unit in $M_j(I) = \ker( q_I \otimes \id_{M_j})$, 
$$
\lim_{\ell \to \infty} \left\|\psi'_n(u) (1-y'_{\ell-1})\right\|  = \left\|q'_I(\psi'_n(u))\right\| = \left\|\psi'(u)\right\|.
$$
It follows therefore from \eqref{02-02-21n} that
$$
\limsup_{k \to \infty}\left\| (\psi^k \otimes \id_{M_j})(u)\right\| \leq \max\big\{\left\|\chi'(u)\right\|, \left\|\psi'(u)\right\|\big\}.
$$
Thanks to \eqref{02-02-21k} this proves (9).

Finally, for the last statement, assume that a unital completely positive linear lift $\widetilde{\psi}$ of $\psi$ exists, and that we have chosen all $\psi_n$'s to be equal to $\widetilde{\psi}$. It then follows immediately 
from their
definition that 
each $\psi^t$ will be a linear, completely positive contraction.
This completes the proof. \end{proof}

Next, we introduce some terminology relative to families $\Theta=(\Theta_t)_{t\in [1,\infty)}$ of maps
$\Theta_t\colon A\to B$ between $C^*$-algebras.
The cone of positive elements in a $C^*$-algebra $A$ will be denoted by $A_+$. Given a self-adjoint element $a =a^*\in A$, there are unique elements  $a^+, a^-\in A_+$ such that $a^+a^- = 0$ and $a = a^+  - a^-$. This notation is used in (e) of the following

\begin{defn}\label{15-01-21} Let $A$ and $B$ be $C^*$-algebras. 
We say that a collection 
$\Theta=(\Theta_t)_{t\in [1,\infty)}$ of maps
$\Theta_t\colon A\to B$ is a
\emph{continuous path}, when $(\Theta_t)_{t\in [1,\infty)}$ is an 
equicontinuous 
family of maps and for every $a\in A$, the assignment 
$[1,\infty)\to A$ given by 
$t\mapsto\Theta_t(a)$ is continuous. 
Additionally, we say that $\Theta$ is
\begin{itemize}
\item[(a)] \emph{unital}, if $\Theta_t(1) = 1$ for all $t$;
\item[(b)] \emph{self-adjoint}, if $\Theta_t$ is self-adjoint for all  $t \in [1,\infty)$;
\item[(c)] \emph{asymptotically linear}, if
$$\lim_{t \to \infty} \left\| \Theta_t(\lambda_1 a_1 + \lambda_2 a_2)-\lambda_1 \Theta_t(a_1) - \lambda_2 \Theta_t(a_2)\right\|=0
$$ 
for all $\lambda_1,\lambda_2 \in \mathbb C$ and all $a_1,a_2 \in A$;
\item[(d)] \emph{asymptotically contractive}, if $\limsup\limits_{t \to \infty} \left\| \Theta_t(a)\right\|\leq\|a\|$ for $a \in A$; 
%\item[e)] $\Theta$ is an asymptotic isometry when $\lim_{t \to \infty} \left\| \Theta_t(a)\right\|= \|a\|$ for all $a \in A$, 
\item[(e)] \emph{asymptotically positive}, if $\Theta$ is self-adjoint and
$$
\lim_{t \to \infty} \Theta_t(a)^-=0
$$
for all positive elements $a \in A_+$;
\item[(f)] \emph{asymptotically completely positive}, if the continuous family
$$
\Theta \otimes \id_{M_n} = \left(\Theta_t \otimes \id_{M_n}\right)_{t \in [1,\infty)}\colon M_n(A)\to  M_n(B)
$$ 
is asymptotically linear and asymptotically positive for all $n$.
\end{itemize} 

\end{defn}

The following is an analog, in the context of continuous families of maps between $C^*$-algebras, of the well-known fact that
self-adjoint, linear maps are positive.

\begin{lemma}\label{16-01-21gx} Let $A$ and $B$ be unital $C^*$-algebras, and let $\Theta \colon A \to  B$ be a continuous family of maps which is self-adjoint, unital, asymptotically linear and an asymptotic contraction. Then $\Theta$ is asymptotically positive.
\end{lemma}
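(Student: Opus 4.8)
The plan is to deduce asymptotic positivity from the standard fact that a unital self-adjoint contraction is positive at the level of the quotient-type estimate $\|1 - \lambda^{-1}a\| \le 1$ when $0 \le a \le \lambda 1$, and to push the small errors coming from asymptotic linearity and asymptotic contractivity through a continuous functional calculus argument. Concretely, fix $a \in A_+$; by rescaling we may assume $\|a\| \le 1$, so that $0 \le a \le 1$ and hence $\|1 - a\| \le 1$ in $A$. The point is to show $\lim_{t\to\infty}\Theta_t(a)^- = 0$.

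First I would record the basic inequality: since $\Theta$ is unital and asymptotically linear, $\Theta_t(1-a) = \Theta_t(1) - \Theta_t(a) + r_t = 1 - \Theta_t(a) + r_t$ where $\|r_t\| \to 0$ as $t \to \infty$; and since $\Theta$ is asymptotically contractive and $\|1-a\|\le 1$, we have $\limsup_{t\to\infty}\|\Theta_t(1-a)\| \le 1$. Combining these, $\limsup_{t\to\infty}\|1 - \Theta_t(a)\| \le 1$. Because $\Theta$ is self-adjoint, $\Theta_t(a)$ is self-adjoint, so this is a statement about the spectrum: $\operatorname{Sp}(\Theta_t(a)) \subseteq [\,-\varepsilon_t,\, 2+\varepsilon_t\,]$ where, say, $\varepsilon_t = \|r_t\| + (\|\Theta_t(1-a)\| - 1)_+ \to 0$. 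In particular the negative part $\Theta_t(a)^-$ has norm at most $\varepsilon_t$, since $\|\Theta_t(a)^-\| = \max(0, -\min\operatorname{Sp}(\Theta_t(a))) \le \varepsilon_t$. Hence $\lim_{t\to\infty}\|\Theta_t(a)^-\| = 0$, which is exactly asymptotic positivity for this $a$.

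The only subtlety is uniformity of the rescaling across the path: for a fixed $a$ the scalar $\|a\|$ is fixed, so replacing $a$ by $a/\|a\|$ (when $a \ne 0$) is harmless and the estimate above applies verbatim after multiplying back by $\|a\|$; the case $a = 0$ is trivial since $\Theta_t(0) \to 0$ by asymptotic linearity. I do not need equicontinuity here beyond what is already packaged into the definition of a continuous family, because the conclusion is a pointwise (in $a$) asymptotic statement. I expect the main obstacle — really the only thing requiring care — to be making precise that $\limsup_{t}\|1 - \Theta_t(a)\| \le 1$ for a self-adjoint element forces the negative spectrum to vanish asymptotically; this is the elementary observation that for self-adjoint $b$, $\|1-b\| \le 1 + \varepsilon$ implies $\operatorname{Sp}(b) \subseteq [-\varepsilon, 2+\varepsilon]$, applied with $b = \Theta_t(a)$ and $\varepsilon = \varepsilon_t$, together with the identity $\|b^-\| = -\min(\operatorname{Sp}(b) \cup \{0\})$.
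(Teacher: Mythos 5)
Your proposal is correct, and it rests on exactly the same central estimate as the paper's proof, but packages the final step differently. Both arguments exploit unitality and asymptotic linearity to get $\Theta_t(\|a\|-a)\approx \|a\|-\Theta_t(a)$ and then use asymptotic contractivity with $\bigl\|\,\|a\|-a\,\bigr\|\le\|a\|$ to obtain $\limsup_t\bigl\|\,\|a\|-\Theta_t(a)\,\bigr\|\le\|a\|$. From there, the paper runs a proof by contradiction: it chooses states $\omega_n$ that vanish on $\Theta_{t_n}(a)^+$ and are bounded below on $\Theta_{t_n}(a)^-$, and derives a numerical contradiction. You instead read the estimate off the spectrum directly: for a self-adjoint $b$ with $\|\lambda - b\|\le\lambda+\varepsilon$ one has $\operatorname{Sp}(b)\subseteq[-\varepsilon,2\lambda+\varepsilon]$, and $\|b^-\|=\max(0,-\min\operatorname{Sp}(b))$. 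Your version is shorter and avoids the contradiction and the auxiliary states; the paper's version avoids invoking the spectral characterisation of the negative part.

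One small remark: the rescaling to $\|a\|\le 1$ is unnecessary and actually introduces a minor gap you then paper over with ``multiplying back by $\|a\|$''. Passing the conclusion $\Theta_t(a/\|a\|)^-\to 0$ back to $\Theta_t(a)^-\to 0$ uses asymptotic linearity once more together with the uniform continuity (on bounded sets) of the map $b\mapsto b^-$ on self-adjoint elements, which you don't mention. It is cleaner simply to skip the normalisation: work with $\|a\|-a$ throughout, obtaining $\operatorname{Sp}(\Theta_t(a))\subseteq[-\varepsilon_t,\,2\|a\|+\varepsilon_t]$ with $\varepsilon_t\to 0$ and hence $\|\Theta_t(a)^-\|\le\varepsilon_t\to 0$ directly. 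The case $a=0$ needs no special handling in that formulation either.
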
 
\begin{proof}
Let $a \in A_+$. In order to reach a contradiction, assume that $\Theta_t(a)^-$ does not converge to $0$ as $t \to \infty$. Then there are $\varepsilon >0$ and a sequence $(t_n)_{n\in\N}$ in $[1,\infty)$ such that $\lim_{n \to \infty} t_n = \infty$ and $\left\|\Theta_{t_n}(a)^-\right\|\geq\varepsilon$ for all $n\in\N$. Find states $\omega_n$ on $B$, for $n\in\N$, such that $\omega_n\left(\Theta_{t_n}(a)^+\right) =0$ and $\omega_n\left(\Theta_{t_n}(a)^-\right) \geq \varepsilon$ for all $n\in\N$. Using that $\Theta_{t}$ is asymptotically linear, we have. 
$$
\lim_{n \to \infty} \left\| \Theta_{t_n}(\|a\| - a)-(\|a\| - \Theta_{t_n}(a))\right\|=0.
$$ 
Similarly, using that $\Theta_t$ is an asymptotic contraction
at the second step, we have
$$
\limsup_{t \to \infty} \left\|\Theta_t( \| a\| - a)\right\|\leq\|\|a\| - a\|\leq\|a\|.
$$
We deduce that there is $n\in\N$ such that
\begin{align*}
& \|a\|+\frac{\varepsilon}{3}\geq 
\omega_n (\Theta_{t_n}(\|a\|-a))\geq\omega_n (\|a\| - \Theta_{t_n}(a))-\frac{\varepsilon}{3} \\
& =  \|a\|  + \omega_n(\Theta_{t_n}(a)^-)- \frac{\varepsilon}{3} \ \geq \|a\| + \frac{2\varepsilon}{3},
\end{align*}
which is a contradiction, and thus $\Theta$ is asymptotically positive.
\end{proof}

We now make explicit what we meant at the beginning of this 
subsection when we said that we want out lifts for $\psi$
to have properties that are
``as close as possible to those of $\psi$''. 
Among others, we would like $\psi'$ to be as close as possible to being completely positive, and if $\psi$ is equivariant, then we would like something similar to be true to for $\psi'$. More generally, we would like the failure of equivariance for $\psi'$ to be approximately controlled by the failure of equivariance for $\psi$. These notions are made precise in the following definition, in a slightly more general setting.

\begin{defn}\label{25-02-21a} Let $(A,\alpha)$, $(B,\beta)$ and $(S,\delta)$ be $G$-$C^*$-algebras, and let $q\colon (A,\alpha) \to (B,\beta)$ an equivariant $*$-homomorphism. Let $\psi\colon S\to B$ be a linear completely positive contraction, and let $\Theta=(\Theta_t)_{t\in [1,\infty)} \colon S\to  A$ be a 
continuous path of maps. 
\begin{enumerate}
\item  We say that $\Theta$ is an \emph{asymptotically $(G,\psi)$-equivariant} lift of $\psi$ when 
\begin{itemize}
\item $q\circ \Theta_t = \psi$ for all $t \in [1,\infty)$;
\item $\Theta$ is asymptotically completely positive;
\item for all
$s\in S$ we have 
$$
\lim\limits_{t \to \infty} \| g \cdot \Theta_t(s)-\Theta_t(h \cdot s)\| = \|g \cdot \psi(s) - \psi(h\cdot s)\|,
$$ 
uniformly for $g$ and $h$ in compact subsets of $G$; and
\item for all $s \in S$, $g\in G$ and $\varepsilon>0$, 
there is an open neighborhood $W$ of $g$ such that for all $h\in W$ we have
$$ 
\sup_{t \in [1,\infty)} \left\|h \cdot \Theta_t(s)-g\cdot \Theta_t(s)\right\|\leq\varepsilon.
$$
\end{itemize}
\item We say that $\Theta$ is a \emph{completely positive asymptotically $(G,\psi)$-equivariant} lift of $\psi$ when
\begin{itemize}
\item $q\circ \Theta_t = \psi$ for all $t \in [1,\infty)$;
\item $\Theta_t$ is a linear completely positive contraction for all $t \in [1,\infty)$; and
\item for all
$s\in S$ we have 
$$
\lim\limits_{t \to \infty} \| g \cdot \Theta_t(s)-\Theta_t(h \cdot s)\| = \|g \cdot \psi(s) - \psi(h\cdot s)\|,
$$ 
uniformly for $g$ and $h$ in compact subsets of $G$.
\end{itemize}
\end{enumerate}
\end{defn}

We are now ready to prove the main result of this section.
The main technical difficulty in proving it is making sure that 
the family $(\theta_t(u))_{t\in [1,\infty)}$ is uniformly 
bounded for all $u\in U$, not just for all $u$ in a dense subalgebra. (If we were only interested in the latter, then we
could take $\theta_t$ to be the map $\psi^t$ constructed in 
Lemma~\ref{01-02-21d}.)

\begin{thm}\label{02-02-21r} 
Let $G$ be a locally compact, second countable 
group, let $(I,\gamma)$ be a $\sigma$-unital $G$-algebra,
and let $d\in I$ be a strictly positive element. Denote by 
$q_I\colon M(I)\to Q(I)$ the canonical quotient map.
Let $(U,\delta)$ be a separable, unital $G$-algebra, let $\chi\colon U\to \mathbb{C}$ be a $G$-invariant state, and let $\psi\colon U\to Q(I)_{\overline{\gamma}}$ be a unital, completely positive linear map.

Then there is a unital asymptotically $(G,\psi)$-equivariant lift 
\[\Theta=(\Theta_t)_{t\in [1,\infty)}\colon U \to  M(I)_{\tilde{\gamma}}\] of $\psi$ with the following additional properties:
 \begin{itemize}
 \item[(i)] For all $u,v \in U$, every compact subset $K \subseteq G$ and every $\varepsilon > 0$, there is $T \geq 1$ such that
 \begin{align*} & 
\sup_{t\geq T} \left\|g \cdot \Theta_t(u) - \Theta_t(h\cdot v) \right\|\\
 & \ \ \ \ \ \ \ \  \leq
\max \left\{|\chi(u-v)|, \|g \cdot \psi(u) - \psi(h\cdot v)\|\right\} + \varepsilon \ 
\end{align*}
for all $g,h \in K$,
\item[(ii)] for all $u \in U$ and all $x \in I$, we have 
$\lim\limits_{t \to \infty} \Theta_t(u)x   =  \chi(u)x$,
\item[(iii)] for all $n\in\N$ and all $u \in M_n(U)$, we have $$\lim_{t\to \infty} \left\|(\Theta_t \otimes \id_{M_n})(u)\right\| = \max\big\{\left\|(\psi \otimes \id_{M_n})(u)\right\|, \left\|(\chi \otimes \id_{M_n})(u)\right\|\big\},$$ and
\item[(iv)] for $u,v\in \ker(\chi)$, we have 
$\lim_{t \to \infty} \|\Theta_t(u)\Theta_t(v) \| = \|\psi(u)\psi(v)\|.$
\end{itemize}

Finally, if a unital linear completely positive lift for $\psi$ exists, then the maps $\Theta_t$ above can be chosen to additionally be unital and completely positive.
\end{thm}
\begin{proof} Consider the $C^*$-algebra
$$
\mathcal{A}  =  \left\{f \in C_b\left([1,\infty), M(I)_{\tilde{\gamma}}\right)\colon f(1) -f(t) \in I \mbox{ for all } t\in [1,\infty) \right\},
$$
and define an action $\mu\colon G\to\Aut(\mathcal{A})$ by
$\mu_g(f)(t)=\tilde{\gamma}_g(f(t))$ for all $g\in G$, all 
$f\in\mathcal{A}$ and all $t\in [1,\infty)$.
Set
$$
\mathcal{A}_0 = \mathcal{A} \cap C_0\left([1,\infty),M(I)_{\tilde{\gamma}}\right),
$$
which is an ideal in $\mathcal{A}$. Note that $\mathcal{A}_0 = C_0([1,\infty),I)$, that $\mu_g(\mathcal A_0) = \mathcal A_0$ for all $g \in G$, and that $\mathcal A_0$ is contained in the continuous part $\mathcal A_\mu$ of $\mathcal A$. 
We denote by $\pi\colon \mathcal{A} \to \mathcal{A}/\mathcal{A}_0$ 
the quotient map and let $\overline{\mu}\colon G\to \Aut(\mathcal{A}/\mathcal{A}_0)$ be the action defined such that $\overline{\mu}_g \circ \pi = \pi \circ \mu_g$
for all $g\in G$. 

Let $\psi^t\colon U\to M(I)_{\tilde{\gamma}}$ be a family of maps satisfying the conclusion of Lemma~\ref{01-02-21d}.
By (7) in said lemma (with $g=e$ and $v=0$), we get $\sup_{t \in [1,\infty)} \left\|\psi^t(u)\right\| < \infty$ when $u \in  U^{(0)}$. In combination with (2) and (5) of 
Lemma~\ref{01-02-21d}, this gives us a map
 $\Phi_0\colon U^{(0)} \to \mathcal{A}$ defined by 
 $$
\Phi_0(u)(t) = \psi^t(u)
$$
for all $u\in U^{(0)}$ and all $t\in [1,\infty)$. Set $\Psi_0=\pi\circ\Phi_0\colon U^{(0)}\to \mathcal{A}/\mathcal{A}_0$.
It follows then from (7) in
Lemma~\ref{01-02-21d} that
$$
\left\|\Psi_0(u) - \Psi_0(v)\right\| \leq \max\{|\chi(u-v)|, \left\|\psi(u-v)\right\|\} 
$$
for all $u,v \in U^{(0)}$. Since $U^{(0)}$ is dense in $U$, it follows from this estimate that $\Psi_0$ extends by continuity to a continuous map $\Psi \colon U \to \mathcal{A}/\mathcal{A}_0$ with the property that
$$
\left\|\Psi(u) - \Psi(v)\right\|  \leq \max\{|\chi(u-v)|, \left\|\psi(u-v)\right\|\} 
$$
for all $u,v \in U$. Note that $\Psi$ is self-adjoint because each $\psi^t$ is and $(U^{(0)})^* = U^{(0)}$. 

Let $u_1,\ldots,u_n \in U^{(0)}$ and 
let $\lambda_1,\ldots,\lambda_n\in \mathbb{Q}[i]$, so
that $ \sum_{j=1}^n \lambda_j u_j$ belongs to $U^{(0)}$ by  \eqref{04-01-21g}. Using (4) of 
Lemma~\ref{01-02-21d} and linearity of $\pi$, we have
$$
\Psi\Big(\sum_{j=1}^n \lambda_j u_j\Big) =\Psi_0\Big(\sum_{j=1}^n \lambda_j u_j\Big) = \sum_{j=1}^n \lambda_j \Psi_0(u_j)= \sum_{j=1}^n \lambda_j \Psi(u_j).
$$
Hence $\Psi$ is $\mathbb{Q}[i]$-linear on $U^{(0)}$, 
and by continuity it is linear on all of $U$. 
Note that $\Psi$ is unital by (1) of 
Lemma~\ref{01-02-21d}.

Let $k \in \mathbb N$. Then $\Psi\otimes \id_{M_k}$ is self-adjoint because $\Psi$ is and it follows from (9) of 
Lemma~\ref{01-02-21d} that $\Psi \otimes \id_{M_k}$ is a contraction, since $\chi \otimes \id_{M_k}$ and $\psi\otimes \id_{M_k}$ are. Since $\Psi\otimes \id_{M_k}$ is also unital, we deduce from Lemma~\ref{16-01-21gx} that $\Psi\otimes \id_{M_k}$ is positive. We conclude that $\Psi$ is completely positive.

Let $g,h \in G$ and $u,v \in U^{(0)}$. Since $G^{(0)}$ is
dense in $G$, we can find a sequence $(h_n)_{n\in\N}$ in $G^{(0)}$ such that $\lim_{n \to \infty} h_n = h$. By \eqref{04-01-21g}, we 
have $h_n\cdot v\in U^{(0)}$ for all $n\in\N$.
Taking $h_n\cdot v\in U^{(0)}$ in place of $v$ and $e$ in 
place of $h$ in (7) of 
Lemma~\ref{01-02-21d}, we get
\begin{align*}
\left\|g\cdot\Psi(u) - \Psi(h_n\cdot v)\right\| &= \limsup_{t \to \infty} \left\|g\cdot \psi^t(u) - \psi^t(h_n \cdot v)\right\| \\
& \leq  \max \left\{|\chi(u-v)|, \|g \cdot \psi(u) - \psi(h_n\cdot v)\|\right\}.
\end{align*}
By continuity of $\Psi$ and $\psi$, we can take the limit on $n$ to get 
\begin{equation}\label{25-02-21} 
\|g\cdot \Psi(u) - \Psi(h\cdot v)\|  \leq  \max \left\{|\chi(u-v)|, \|g \cdot \psi(u) - \psi(h\cdot v)\|\right\}.
\end{equation}
Since $U^{(0)}$ is dense in $U$ and $\Psi$ and $\psi$ are continuous, we conclude that \eqref{25-02-21} above holds for all $u, v \in U$. It follows, in particular, that 
$$
\lim_{g \to e} \overline{\mu}_g (\Psi(u)) = \Psi(u),
$$
showing that $\Psi$ takes values in the continuous part $(\mathcal{A}/\mathcal{A}_0)_{\overline{\mu}}$ of $\mathcal{A}/\mathcal{A}_0$. It follows from \cite{Br} that $\pi(\mathcal{A}_\mu) = (\mathcal{A}/\mathcal{A}_0)_{\overline{\mu}}$ and then from the Bartle-Graves selection theorem, \cite{BG}, that there is a continuous map $\Psi'\colon U \to \mathcal{A}_\mu$ such that $\pi \circ \Psi' = \Psi$. By substituting $\Psi'$ with $\Psi' - \Psi'(0)$ we can assume that $\Psi'(0) = 0$, and by substituting $\Psi'(u)$ with $\Psi'(u) + \chi(u)(1-\Psi'(1))$, also that $\Psi'(1) = 1$. By substituting $\Psi'(u)$ with $\frac{1}{2}\left(\Psi(u) + \Psi(u^*)^*\right)$, we may assume that $\Psi'$ is self-adjoint. For $t\in [1,\infty)$, let $\ev_t\colon \mathcal{A} \to M(I)_{\tilde{\gamma}}$ be evaluation at $t$ and set
$$
\Theta_t = \ev_t \circ \Psi'\colon U\to M(I)_{\tilde{\gamma}}.
$$ 
Then $\Theta=(\Theta_t)_{t\in [1,\infty)}$ is a unital and self-adjoint continuous family of maps $U \to  M(I)_{\tilde{\gamma}}$. 
In the remainder of the proof, we check that $\Theta$ has the properties required in the statement of the lemma. 

Fix $t\in [1,\infty)$. 
To show that $q_I \circ \Theta_t = \psi$, it suffices to check the identity on $U^{(0)}$. Fix $u\in U^{(0)}$. 
Then $\Psi'(u) - \Phi_0(u) \in \mathcal{A}_0$ and hence $\Theta_t(u) - (\ev_t \circ \Phi_0)(u)$ belongs to $I$. 
Since $\ev_t(\Phi_0(u)) = \psi^t(u)$, it follows 
that $q_I(\Theta_t(u)) = q_I(\psi^t(u))$, which equals $\psi(u)$
by (5) of 
Lemma~\ref{01-02-21d}. 

To check that $\Theta$ is asymptotically linear, let $\lambda_1,\lambda_2 \in \mathbb C$ and $u_1,u_2\in U$. Using the linearity of $\Psi$ we find that
\begin{align*}
&\limsup_{t\to \infty}  \left\| \Theta_t  (\lambda_1 u_1 + \lambda_2 u_2)- \lambda_1 \Theta_t(u_1) - \lambda_2 \Theta_t (u_2)\right\|\\
& \ \ \ = \limsup_{t \to \infty}\left\|\ev_t\big(\Psi'(\lambda_1 u_1+\lambda_2u_2)- \lambda_1 \Psi'(u_1)- \lambda_2 \Psi'(u_2)\big) \right\| \\
& \ \ \ = \left\|\pi\big(\Psi'(\lambda_1 u_1+\lambda_2u_2)- \lambda_1 \Psi'(u_1)- \lambda_2 \Psi'(u_2)\big) \right\| \\
& \ \ \ = \left\|\Psi(\lambda_1 u_1+\lambda_2u_2)- \lambda_1 \Psi(u_1)- \lambda_2 \Psi(u_2) \right\|= 0,
\end{align*}
as desired. To show that $\Theta$ is asymptotically completely positive, we fix $k \in \mathbb N$ and will prove that $\Theta \otimes \id_{M_k}$ is asymptotically positive. Note that $\Theta \otimes \id_{M_k}$ is self-adjoint, unital and asymptotically linear. By Lemma~\ref{16-01-21gx}, it suffices to show that
\begin{equation}\label{18-01-21bx}
\limsup_{t \to \infty} \|(\Theta_t \otimes \id_{M_k})(u)\| \leq \max \{\|(\psi\otimes \id_{M_k})(u)\|,  \|(\chi\otimes \id_{M_k})(u)\|\}
\end{equation} 
for all $u \in M_k(U)$, since $\psi\otimes \id_{M_k}$ and $\chi \otimes \id_{M_k}$ are contractions. This estimate follows from equicontinuity of the continuous family $\Theta \otimes \id_{M_k}$ together with (8) of 
Lemma~\ref{01-02-21d}, since $M_k(U^{(0)})$ 
is dense in $M_k(U)$. We conclude that $\Theta$ is is asymptotically completely positive.

To obtain (i), let $u,v \in U$ and let $K$ be a compact subset of $G$. Let $\varepsilon > 0$. Since $\Psi'(u) \in \mathcal{A}_\mu$ and since $\Theta$ is equicontinuous, for every 
$g,h\in K$ 
there are open neighborhoods $V_g$ of $g$ and $W_h$ of $h$ such that 
$$
\left\| g'\cdot \Theta_t(u) - \Theta_t(h'\cdot v)\right\| \leq \left\| g\cdot \Theta_t(u) - \Theta_t(h\cdot v)\right\| + \frac{\varepsilon}{3}
$$
for all $(g',h') \in V_g \times W_h$ and for all $t \in [1,\infty)$. By shrinking $V_g$ and $W_h$ we can also arrange that
$$
\left\| g\cdot \psi(u) - \psi( h \cdot v)\right\| \leq \left\| g'\cdot \psi(u) - \psi( h' \cdot v)\right\| + \frac{\varepsilon}{3}
$$
for all $(g',h') \in V_g \times W_h$. By compactness of $K  \times K$, there is a finite set $ K_0 \subseteq K \times K$ such that $\{V_g \times W_h\colon (g,h)\in K_0\}$ covers 
$K\times K$. Since $K_0$ is finite, it follows from \eqref{25-02-21} that there is a $T \geq 1$ such that
$$
\left\| g\cdot \Theta_t(u) - \Theta_t(h\cdot v)\right\| \leq \max\{|\chi(u-v)|, \left\| g\cdot \psi(u) - \psi( h \cdot v)\right\|\} + \frac{\varepsilon}{3}
$$
for all $(g,h) \in K_0$ when $t \geq T$. 
An easy application of the triangle inequality then yields
$$
\left\| g\cdot \Theta_t(u) - \Theta_t(h\cdot v)\right\| \leq \max\{|\chi(u-v)|, \left\| g\cdot \psi(u) - \psi( h \cdot v)\right\|\} + \varepsilon
$$
for all $g,h\in K$ when $t \geq T$, which establishes (i). On the other hand it follows from \eqref{25-02-21} that
\begin{align*}
& \limsup_{t \to \infty} \left\|g \cdot \Theta_t(u) - \Theta_t(h \cdot u)\right\| = \left\|g \cdot \Psi(u) -\Psi(h \cdot u)\right\|\\
& \leq \left\|g \cdot \psi(u) -\psi(h \cdot u)\right\|,
\end{align*}
for all $g,h \in G, \ u \in U$, which combined with (i) shows that $\Theta$ satisfies the condition in the third bullet of (1) in Definition \ref{25-02-21a}. The fourth bullet of (1) in Definition \ref{25-02-21a} follows because $\Psi(U) \subseteq \mathcal A_{\mu}$.

It remains now only to establish the last three items, (ii)-(iv), in the statement. For (ii) consider first an element $u\in U^{(0)}$. Taking 
$m=d$, $g = h = e$ and $v = \chi(u)\in U^{(0)}$ in (7) of Lemma~\ref{01-02-21d} gives 
$$
\lim_{t \to \infty} \left\|d(\psi^t(u) - \chi(u))\right\| = 0.
$$ 
Hence, upon identifying $d$ with the element of $\mathcal{A}$ which is constant with value $d$, we have $\pi(d{\Psi'}(u) ) = \chi(u)\pi(d)$. This identity means that $d\Psi'(u) - \chi(u)d$ belongs to $\mathcal{A}_0$. It follows that 
$$\lim_{t \to \infty} \Theta_t(u)d - \chi(u)d = \lim_{t \to \infty} \left(d\Theta_t(u^*) - \chi(u^*)d\right)^* = 0\ .
$$  
Since $\overline{dI} = I$ and $\left\|\Theta_t(u)\right\|$ is uniformly bounded, it follows that 
\begin{equation}\label{15-03-21}
\lim_{t \to \infty} \Theta_t(u)x= \chi(u)x
\end{equation} 
for all $x \in I$. Since $\Theta$ is an equicontinuous family of maps this conclusion extends to all elements $u \in U$.

To obtain (iii) from (ii), fix $u\in M_n(U)$ and $\varepsilon>0$. 
Since $\Theta_t \otimes \id_{M_n}$ is a lift of $\psi \otimes \id_{M_n}$, we have
\begin{align}\label{24-01-21ax}
\|(\psi \otimes \id_{M_n})(u)\| \leq \|(\Theta_t \otimes \id_{M_n})(u)\| 
\end{align}
for all $t\in [1,\infty)$. Since $M_n(M(I))\cong M(M_n(I))$, we can
choose $x \in M_n(I)$ with $\|x\| \leq 1$ such that
$$
\left\|(\chi \otimes \id_{M_n})(u)x\right\| > \left\|(\chi \otimes \id_{M_n})(u)\right\| -  \varepsilon.
$$
It follows from \eqref{15-03-21} that 
$$
\lim_{t \to \infty} (\Theta_t\otimes \id_{M_n}(u))x = (\chi\otimes \id_{M_n})(u)x,
$$ 
and hence 
$$
\left\|(\Theta_t\otimes \id_{M_n})(u)\right\| \geq  \left\|(\Theta_t\otimes \id_{M_{n}})(u)x\right\|  >  \left\|(\chi \otimes \id_{M_n})(u)\right\| -  \varepsilon 
$$
for all $t$ big enough. In combination with 
\eqref{24-01-21ax}, this shows that
$$
\liminf_{t \to \infty} \left\|(\Theta_t\otimes \id_{M_n})(u)\right\|  \geq  \max\big\{\left\|(\psi \otimes \id_{M_n})(u)\right\|,   \left\|(\chi \otimes \id_{M_n})(u)\right\| \big\}.
$$
In combination with \eqref{18-01-21bx} this gives us (iii).

Finally, from (8) of Lemma~\ref{01-02-21d} we get the estimate
$$
\limsup_{t \to \infty} \big\|\Theta_t(u - \chi(u))\Theta_t(v - \chi(v)) \big\| \leq \left\|\psi(u-\chi(u))\psi(v- \chi(v))\right\|
 \ 
$$
for all $u,v \in U^{(0)}$. This estimate extends to all $u,v \in U$ by equicontinuity of $\Theta$, implying that
$$
\limsup_{t \to \infty} \left\|\Theta_t(u)\Theta_t(v)\right\| \leq \left\|\psi(u)\psi(v)\right\|
$$
when $u,v \in \ker \chi$. Since $\left\|\psi(u)\psi(v)\right\| = \left\|q_I(\Theta_t(u)\Theta_t(v))\right\| \leq \left\|\Theta_t(u)\Theta_t(v)\right\|$ for all $t$, we have established (iv).

The last assertion in the theorem follows from the last assertion
of Lemma~\ref{01-02-21d}, once we choose the maps $\psi^t$ to also be
linear, unital and completely positive.
\end{proof}

\section{Asymptotic lifts}\label{Alifts}

In this section, we prove our main result, Theorem~\ref{04-01-21d}, which implies Theorem~A from the introduction. Before doing this,
we first prove an auxiliary result in the unital setting, Proposition~\ref{18-01-21h}, assuming
that $S, A$ and $\psi$ are unital, and that there exists a 
$G$-invariant state on $S$. 
This 
result will follow from the work done in the previous section by using the Busby invariant to relate an arbitrary extension to a Calkin extension. The general case will be reduced to this 
one by taking (forced) unitizations.

\begin{prop}\label{18-01-21h}
Let $G$ be a second countable, locally compact group, let 
\begin{equation}\label{G-extensionxxx}
\begin{xymatrix}{
0 \ar[r] & (I, \gamma) \ar[r]^\iota & (A,\alpha) \ar[r]^-q  & (B,\beta)  \ar[r] & 0 }
\end{xymatrix}
\end{equation}
be an extension of $G$-algebras.
Let $(S,\delta)$ be a 
separable $G$-algebra and let $\psi\colon S \to B$ be a linear completely positive contraction.
Assume moreover that $A$, $S$ and $\psi$ are unital, and that
there is a $G$-invariant state $\chi$ on $S$.

Then there is a unital asymptotically $(G,\psi)$-equivariant lift $\Theta=(\Theta_t)_{t\in [1,\infty)} \colon S\to  A$ of $\psi$ with the following additional properties:

\begin{itemize}
\item[(a)] For all $s,s' \in S$, every compact subset $K \subseteq G$ and every $\varepsilon > 0$, there is $T \geq 1$ such that
 \begin{align*} & 
\sup_{t\geq T} \left\|g \cdot \Theta_t(s) - \Theta_t(h\cdot s') \right\|\\
 & \ \ \ \ \ \ \ \  \leq
\max \left\{|\chi(s-s')|, \|g \cdot \psi(s) - \psi(h\cdot s')\|\right\} + \varepsilon \ 
\end{align*}
for all $g,h \in K$,
\item[(b)] for all $n\in\N$ and all $s \in M_n(S)$, we have 
$$\lim_{t\to \infty} \left\|(\Theta_t \otimes \id_{M_n})(s)\right\| = \max\big\{\left\|(\psi \otimes \id_{M_n})(s)\right\|, \left\|(\chi \otimes \id_{M_n})(s)\right\|\big\},$$ 
\item[(c)] for $s,s'\in \ker(\chi)$ we have 
$\lim_{t \to \infty} \|\Theta_t(s)\Theta_t(s') \| = \|\psi(s)\psi(s')\|,$ and
\item[(d)] if $I$ is $\sigma$-unital, we can also arrange that 
$\lim\limits_{t \to \infty} \Theta_t(s)x   =  \chi(s)x$
for all $s \in S$ and all $x \in I$.
\end{itemize}

Moreover, if a linear, unital completely positive lift for $\psi$ exists, then the continuous family $\Theta$ as above can be chosen so that, in addition, each map $\Theta_t$ is unital and completely positive.
\end{prop}
\begin{proof}
We first explain how to reduce to the case where $I$ is $\sigma$-unital (in fact, we reduce to the case where $A$ is separable). Since $S$ is separable and $G$ is second countable, we can choose separable $G$-subalgebras $B_0 \subseteq B$ and $A_0\subseteq A$ with $q(A_0)=B_0$ such that $\psi(S) \subseteq B_0$. When $I$ is $\sigma$-unital, we may arrange that $A_0$ contains a strictly positive element of $I$. In either case, 
$I_0 = I \cap A_0$ is separable, $(I_0,\gamma)$ is a $G$-algebra, and the following diagram commutes:
\begin{equation*}\label{E2}
\begin{xymatrix}{
0 \ar[r] & (I_0,\gamma) \ar@{^{(}->}[d]  \ar[r]  & (A_0,\alpha) \ar@{^{(}->}[d] \ar[r]^-{q} &  (B_0,\beta) \ar@{^{(}->}[d]  \ar[r] & 0 \\
0 \ar[r] & (I,\gamma) \ar[r]  & (A,\alpha) \ar[r]^-{q}  &  (B,\beta) \ar[r] & 0. }
\end{xymatrix}
\end{equation*}
Since $\psi$ takes values in $B_0$ it follows from the preceding discussion that there is an asymptotically $(G,\psi)$-equivariant lift of $\psi$ to $A_0$, and by the above diagram this is also a lift of $\psi$ to $A$. Thus, it suffices to prove the result
for the extension $0\to I_0\to A_0\to B_0\to 0$; in other words,
we may assume from now on that $A$ is separable.

Define $r\colon A \to M(I)$ by $\iota(r(a)x) = a\iota(x)$ for all $a\in A$ and all $x \in I$. Then $r\colon (A,\alpha) \to (M(I)_{\tilde{\gamma}},\tilde{\gamma})$ is a unital $G$-equivariant $*$-homomorphism, and thus $q_I \circ r\colon  (A,\alpha) \to (Q(I)_{\overline{\gamma}},\overline{\gamma})$ is $G$-equivariant as well. Since $\iota(I) \subseteq \ker(q_I \circ r)$, we obtain an equivariant $*$-homomorphism $\phi\colon (B,\beta) \to (Q(I)_{\overline{\gamma}},\overline{\gamma})$ making the following diagram commute:
\begin{equation*}
\centerline{\begin{xymatrix}{
A \ar[d]^-r  \ar[r]^-{q}  & B \ar[d]^-\phi\\
M(I)_{\tilde{\gamma}} \ar[r]_{q_I}  &  Q(I)_{\overline{\gamma}}. }
\end{xymatrix}}
\end{equation*}
The map $\phi$ is known as the Busby invariant of the extension \eqref{G-extensionxxx}. Set
$$
E= \left\{ (m,b) \in M(I)_{\tilde{\gamma}} \oplus B\colon \ q_I(m)= \phi(b) \right\}.
$$
Define $\mu\colon G \to \Aut(E)$ by $\mu_g(m,b) = \left(\tilde{\gamma}_g(m), \beta_g(b)\right)$ for all $g\in G$
and all $(m,b)\in E$. Then $(E,\mu)$ is a $G$-algebra. 
Define $\ast$-homomorphisms $\iota'\colon I \to E$, $p\colon E\to B$,
and $\xi\colon A\to E$
by $\iota'(x) = (x,0)$, $p(m,b) = b$, and $\xi(a) = (r(a), q(a))$, for $x\in I$, $(m,b)\in E$ and $a\in A$. Then
\begin{equation*}\label{E3}
\begin{xymatrix}{
0 \ar[r] & (I,\gamma) \ar@{=}[d] \ar[r]^-\iota  & (A,\alpha) \ar[d]^-{\xi}\ar[r]^-{q} &  (B,{\beta})\ar@{=}[d] \ar[r] & 0 \\
0 \ar[r] & (I,\gamma) \ar[r]_-{\iota'}  & (E,\mu) \ar[r]_-{p}  &  (B,{\beta}) \ar[r] & 0 }
\end{xymatrix}
\end{equation*}
is a commuting diagram of $G$-algebras with exact rows. In particular, $\xi$ is an isomorphism of $G$-algebras. 
Let $\Theta'=(\Theta'_t)_{t\in [1,\infty)} \colon S\to  M(I)_{\tilde{\gamma}}$ be a unital asymptotically $(G,\phi \circ \psi)$-equivariant lift of the unital completely positive contraction $\phi \circ \psi \colon S \to Q(I)_{\overline{\gamma}}$
satisfying the conclusion of Theorem~\ref{02-02-21r}. 
Define $\Theta=(\Theta_t)_{t\in [1,\infty)} \colon S\to  A$ by
$$
\Theta_t(s)= \xi^{-1}\left((\Theta'_t(s), \psi(s))\right) \ 
$$
for all $t\in [1,\infty)$ and all $s\in S$.
One readily checks that $\Theta$ is a unital asymptotically $(G,\psi)$-equivariant lift of $\psi$ to $A$ satisfying the conclusion of this theorem.

Finally, if a unital linear completely positive lift for $\psi$ 
exists, then a unital linear completely positive lift for 
$\phi\circ\psi$ exists as well. Applying the last part of the 
statement of Theorem~\ref{02-02-21r}, we may choose $\Theta'$
so that $\Theta'_t$ is additionally unital and completely positive,
in which case the same is true for $\Theta$. 
\end{proof}

We now present the main result of this work.

\begin{thm}\label{04-01-21d}
Let $G$ be a second countable, locally compact group, let 
\[
\begin{xymatrix}{
0 \ar[r] & (I, \gamma) \ar[r]^\iota & (A,\alpha) \ar[r]^-q  & (B,\beta)  \ar[r] & 0 }
\end{xymatrix}
\]
be an extension of $G$-algebras.
Let $(S,\delta)$ be a 
separable $G$-algebra and let $\psi\colon S \to B$ be a linear completely positive contraction.
Then there is a lift $\Theta=(\Theta_t)_{t\in [1,\infty)} \colon S\to  A$ of $\psi$ with the following properties:
\begin{itemize}
 \item[(a)] For all $s,s'\in S$, we have 
 $$\lim_{t \to \infty} \left\| g \cdot \Theta_t(s) - \Theta_t(h\cdot s')\right\| = \left\|g \cdot \psi(s) - \psi(h\cdot s')\right\|,$$
 uniformly for $g,h$ in compact subsets of $G$;
 \item[(b)] for all $n\in\N$ and all $s\in M_n(S)$, we have 
 $$\lim\limits_{t\to \infty} \|(\Theta_t \otimes \id_{M_n})(s)\|  =  \|(\psi \otimes \id_{M_n})(s)\| .$$
 \item[(c)] for $s,s'\in S$, we have 
 $\lim_{t \to \infty} \|\Theta_t(s)\Theta_t(s')\| = \|\psi(s)\psi(s')\|$;
 \item[(d)] if $I$ is $\sigma$-unital, we can also arrange that
$\lim\limits_{t \to \infty} \Theta_t(s)x   =  0$
for all $s \in S$ and $x \in I$. 
\end{itemize} 
Finally, if a completely positive lift for $\psi$ exists,
then the continuous family $\Theta$ as above can be chosen so that, in addition, each map $\Theta_t$ is completely positive.
\end{thm}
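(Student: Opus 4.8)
The plan is to deduce the non-unital statement from the unital one, \refthm{18-01-21h}, by passing to forced unitizations. First I would apply the functor $(-)^\dagger$ to the extension \eqref{G-extensionxxx}, obtaining an extension of $G$-algebras $0\to (I,\gamma)\to (A^\dagger,\alpha^\dagger)\to (B^\dagger,\beta^\dagger)\to 0$ with quotient map $q^\dagger$, in which $A^\dagger$ is unital, $q^\dagger$ is surjective with $\ker(q^\dagger)=I$, and the ideal $I$ is literally unchanged --- in particular it is $\sigma$-unital exactly when it was in \eqref{G-extensionxxx}. Since $\psi$ is a completely positive linear contraction, $\psi^\dagger\colon S^\dagger\to B^\dagger$ is a \emph{unital} completely positive linear contraction; moreover $S^\dagger$ is separable and unital, and $\chi_S$ is a $G$-invariant state on $S^\dagger$. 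Thus \refthm{18-01-21h} applies with $\chi=\chi_S$ and yields a unital asymptotically $(G,\psi^\dagger)$-equivariant lift $\Theta^\dagger=(\Theta^\dagger_t)_{t\in[1,\infty)}\colon S^\dagger\to A^\dagger$ of $\psi^\dagger$ satisfying conditions (a)--(d) of that theorem.

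The second step is to restrict $\Theta^\dagger$ back to $S$ and check that it lands in $A$. I would set $\Theta_t=\Theta^\dagger_t|_S$. For $s\in S$ we have $q^\dagger(\Theta^\dagger_t(s))=\psi^\dagger(s)=\psi(s)\in B=\ker(\chi_B)$, and since $\chi_B\circ q^\dagger=\chi_A$ this forces $\chi_A(\Theta^\dagger_t(s))=0$, i.e. $\Theta^\dagger_t(s)\in\ker(\chi_A)=A$. Hence $\Theta=(\Theta_t)_{t\in[1,\infty)}$ is a continuous path $S\to A$ (equicontinuity and pointwise continuity being inherited), and $q\circ\Theta_t=\psi$ for all $t$.

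The final step is to transport the asymptotic properties from $\Theta^\dagger$ to $\Theta$; this is bookkeeping, using that $\delta^\dagger$ restricts to $\delta$ on $S$, that $\alpha^\dagger$ restricts to $\alpha$ on $A$, that $q$ is a contractive equivariant $*$-homomorphism, and that for a self-adjoint element of $M_n(A)$ its negative part is the same whether computed in $M_n(A)$ or in $M_n(A^\dagger)$ (since $M_n(A)\trianglelefteq M_n(A^\dagger)$), so that asymptotic complete positivity of $\Theta^\dagger\otimes\id_{M_n}$ restricts to that of $\Theta\otimes\id_{M_n}$. Likewise asymptotic linearity and the third and fourth bullets of Definition \ref{25-02-21a}(1) restrict directly. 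For property (a): the bound ``$\limsup_t\leq$'' is condition (a) of \refthm{18-01-21h} applied to $\psi^\dagger$ and to elements $s,s'\in S$, where $\chi_S(s-s')=0$ because $s-s'\in S=\ker(\chi_S)$; the reverse ``$\liminf_t\geq$'' comes from $\|g\cdot\psi(s)-\psi(h\cdot s')\|=\|q(g\cdot\Theta_t(s)-\Theta_t(h\cdot s'))\|\leq\|g\cdot\Theta_t(s)-\Theta_t(h\cdot s')\|$; and since the first bound is uniform on compacts while the second is an exact pointwise lower bound, together they give the limit uniformly for $g,h$ in compact subsets of $G$. Property (b) follows from condition (b) of \refthm{18-01-21h} because $(\chi_S\otimes\id_{M_n})(s)=0$ for $s\in M_n(S)$, collapsing the maximum there to $\|(\psi\otimes\id_{M_n})(s)\|$; property (c) follows from condition (c) of \refthm{18-01-21h} since $\ker(\chi_S)=S$; and property (d) follows from condition (d) of \refthm{18-01-21h} since $\chi_S(s)=0$ for $s\in S$.

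I do not expect a genuine obstacle: all the analysis sits inside \refthm{18-01-21h} (hence inside Lemma \ref{02-02-21r} and the construction of Section \ref{lifts}). The only subtlety worth isolating is the verification that $\Theta^\dagger_t(S)\subseteq A$ rather than merely $A^\dagger$, and that complete positivity is insensitive to whether one views matrices over $A$ or over $A^\dagger$; both are handled by the identity $\chi_B\circ q^\dagger=\chi_A$ together with $M_n(A)\trianglelefteq M_n(A^\dagger)$.
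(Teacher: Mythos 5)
Your proposal is correct and follows essentially the same route as the paper's own proof: pass to the forced unitizations, apply Theorem~\ref{18-01-21h} to $\psi^\dagger$ with the canonical invariant state $\chi_S$, and use $\chi_A=\chi_B\circ q^\dagger$ to see that the lift restricts to a map $S\to A$; items (a)--(d) then follow from the corresponding items of Theorem~\ref{18-01-21h} because $S=\ker(\chi_S)$, with the reverse inequality in (a) supplied by the contractivity of $q$. The extra verifications you include (that complete positivity is insensitive to passing from $M_n(A^\dagger)$ to its ideal $M_n(A)$, that the four bullets of Definition~\ref{25-02-21a}(1) restrict) are correct and merely spell out steps the paper leaves implicit.
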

\begin{proof}
We begin by introducing some notation.
Let $D$ be a $C^*$-algebra. We denote by $D^\dagger$ the 
``forced'' unitization of $D$, namely $D^\dagger$ is
the (minimal) unitization of $D$ if $D$ is not unital, 
and $D^\dagger=D\oplus \mathbb C$ if $D$ is unital. 
Note that $D$ is an ideal in $D^\dagger$. We denote 
by $\chi_D\colon D^\dagger\to \mathbb C$ the unique state
satisfying $\ker(\chi_D)=D$. This construction is functorial,
and for a linear map $\phi\colon D\to E$ between $C^*$-algebras,
we write $\phi^\dagger\colon D^\dagger \to E^\dagger$ for 
the unique linear extension of $\phi$ which satisfies
$\chi_B\circ \phi^\dagger = \chi_D$. It is easy to see
that $\phi^\dagger$ is a $\ast$-homomorphism if $\phi$ is. Moreover, it is easy to see that the functor $\dagger$ preserves short exact sequences of $C^*$-algebras.
When $\phi$ is a completely positive contractive linear
map, then the same is true for $\phi^\dagger$ by 
Lemma 3.9 in \cite{CE}, or by A.4 on page 266 in \cite{NS}. In particular, if $(D,\delta)$ is a $G$-algebra, then 
$\delta$ extends uniquely to an action $\delta^\dagger\colon 
G\to\Aut(D^\dagger)$ satisfying $\chi_D\circ\alpha_g^\dagger=\chi_D$ for all $g\in G$. 
Finally, if $\phi\colon (D,\delta)\to (E,\epsilon)$ is an
equivariant linear map between $G$-algebras, then 
$\phi^\dagger\colon (D^\dagger,\delta^\dagger)\to 
(E^\dagger,\epsilon^\dagger)$ is also equivariant. 

Applying the above observations to our setting,
we get the extension
\begin{equation*}\label{G-extension1}
\centerline{\begin{xymatrix}{
0 \ar[r] & (I, \gamma) \ar[r]^-\iota & (A^\dagger,\alpha^\dagger) \ar[r]^-{q^\dagger}  & (B^\dagger,\beta^\dagger)  \ar[r] & 0 }
\end{xymatrix}}
\end{equation*}
of $G$-algebras. Moreover,
$\psi^\dagger \colon S^\dagger \to B^\dagger$ is a unital linear completely positive contraction satisfying $\chi_S=\chi_B\circ\psi^\dagger$.
Using that $\chi_S$ is $G$-invariant, 
let $\Theta' = (\Theta'_t)_{t \in [1,\infty)}\colon S^\dagger \to A^\dagger$ be a unital asymptotically $(G,\psi^\dagger)$-equivariant lift of $\psi^\dagger$ to $A^\dagger$
satisfying the conclusion from Theorem~\ref{18-01-21h}. Given $s\in S$, we have 
\[\chi_A \circ \Theta'_t(s) = \chi_B \circ q^\dagger \circ \Theta'_t(s) = \chi_B \circ \psi^\dagger(s) = \chi_S(s) = 0.\]
Hence, the restriction of $\Theta'$ to $S$, which we denote
by $\Theta$, is an asymptotically $(G,\psi)$-equivariant lift of $\psi$ to $A$. Parts (b), (c) and (d) of this theorem follow, respectively, 
from (b), (c) and (d) in Theorem~\ref{18-01-21h}, 
since $S=\ker(\chi_S)$.
Part (a), which is a strengthening of asymptotic $(G,\psi)$-equivariance, follows from (a) in Theorem~\ref{18-01-21h} together
with the fact that
\begin{align*}
\|g \cdot \psi(s) -\psi(h \cdot s')\| &= \|q(g \cdot \Theta_t(s) - \Theta_t(h \cdot s'))\|\\ &
\leq \|g \cdot \Theta_t(s) - \Theta_t(h \cdot s')\| 
\end{align*}
for all $g,h\in G$, all $s,s'\in S$, and all $t\in [1,\infty)$. 
Finally, the last part of this theorem follows from the last part of the statement of Theorem~\ref{18-01-21h}.
\end{proof}

\section{Unital asymptotic sections and amenability}\label{amenabl}

Let $q\colon (A,\alpha)\to (B,\beta)$ be a surjective
$\ast$-homomorphism between $G$-algebras.
An \emph{asymptotically equivariant linear section} for $q$ is an asymptotically linear asymptotic contraction 
$\Theta=(\Theta_t)_{t\in [1,\infty)}\colon B 
\to  A$ such that $q \circ \Theta_t = \id_B$ for all $t$ and such that
$$\lim_{t \to \infty} \Theta_t(g \cdot b)- g\cdot \Theta_t(b)= 0$$ 
for all $g \in G$ and all $b \in B$.

Theorem \ref{04-01-21d} guarantees the existence of an asymptotically equivariant linear section for any extension of $G$-algebras \eqref{G-extensionxxx} with $B$ separable, and in fact one which is also asymptotically completely positive. However, when $A$ is unital so that $B$ is also unital, it is natural to look for an asymptotically equivariant linear section $\Theta$ which is also unital, or at least \emph{asymptotically unital} in the sense that 
$$
\lim_{t \to \infty} \Theta_t(1) = 1.
$$
The section provided by Theorem \ref{04-01-21d} is never
unital, as item (d) shows. Theorem \ref{18-01-21h}, on the other hand, does provide a unital asymptotically equivariant linear section and in fact one which is also asymptotically completely positive, provided there is a $G$-invariant state on $B$. When $G$ is amenable, it is a consequence
of Day's fixed point theorem that every unital $G$-algebra has an invariant state. Therefore, when $G$ is amenable, Theorem \ref{18-01-21h} guarantees the existence of a unital asymptotically equivariant linear section for any 
equivariant surjection $(A,\alpha)\to (B,\beta)$
whenever $A$ is unital and $B$ separable. In fact, said 
theorem provides a unital asymptotically $(G,\id_B)$-equivariant lift of $\id_B$. We show next that this property characterises amenability within the class of second countable locally compact groups.

\begin{thm}\label{05-01-21b} Let $G$ be a second countable locally compact group. The following are equivalent:
\begin{itemize}
\item[(1)] For every compact metrizable $G$-space $X$, the natural extension 
\begin{equation*}\label{14-03-21b}
0\to \mathbb{C} \to C(X)\oplus\mathbb{C} \to C(X)\to 0
\end{equation*} 
admits an asymptotically equivariant linear section which is also aymptotically unital.
\item[(2)]  For every extension \eqref{G-extensionxxx} of $G$-algebras with $B$ separable and $A$ unital there is a unital asymptotically $(G,\id_B)$-equivariant lift of $\id_B$. 
\item[(3)] Every $G$-algebra $(B,\beta)$ with $B$ unital and separable has a $G$-invariant state. 
\item[(4)] $G$ is amenable.
\end{itemize}
\end{thm}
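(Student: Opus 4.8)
The plan is to establish the cycle of implications $(4)\Rightarrow(3)\Rightarrow(2)\Rightarrow(1)\Rightarrow(4)$.

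The implication $(4)\Rightarrow(3)$ is Day's fixed point theorem, exactly as recalled in the discussion preceding the statement: an amenable group $G$ acts by separately weak$^*$-continuous affine homeomorphisms on the weak$^*$-compact convex state space $S(B)$ of a unital $G$-algebra $(B,\beta)$, hence fixes a point, which is a $G$-invariant state. For $(3)\Rightarrow(2)$, let $0\to(I,\gamma)\to(A,\alpha)\to(B,\beta)\to 0$ be an extension of $G$-algebras with $A$ unital and $B$ separable; then $B$ is unital, and by $(3)$ it carries a $G$-invariant state $\chi$. Applying Theorem~\ref{18-01-21h} with $(S,\delta)=(B,\beta)$, $\psi=\id_B$ and this $\chi$ produces a unital asymptotically $(G,\id_B)$-equivariant lift of $\id_B$ (note that property (d) of that theorem, the only one requiring $\sigma$-unitality of $I$, is not needed here). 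For $(2)\Rightarrow(1)$, given a compact metrizable $G$-space $X$ I would take $A=C(X)\oplus\mathbb{C}$ (unital) and $B=C(X)$ (separable), with $G$ acting trivially on the ideal $\mathbb{C}$ and in the given way on $C(X)$, so the extension in $(1)$ has the form \eqref{G-extensionxxx}. By $(2)$ there is a unital asymptotically $(G,\id_B)$-equivariant lift $\Theta$ of $\id_B$; setting $g=h$ in the asymptotic-equivariance condition of Definition~\ref{25-02-21a} shows $\lim_{t\to\infty}\|g\cdot\Theta_t(s)-\Theta_t(g\cdot s)\|=0$, so $\Theta$ is asymptotically equivariant; it is asymptotically linear because it is asymptotically completely positive, and asymptotically contractive because it is unital and asymptotically completely positive (apply asymptotic positivity of $\Theta_t\otimes\id_{M_2}$ to the positive $2\times 2$ element over $A$ with off-diagonal entry $f$ and diagonal entries $\|f\|$). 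Hence $\Theta$ is the required asymptotically unital asymptotically equivariant linear section.

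The real content is $(1)\Rightarrow(4)$, which I would split into two steps. First, $(1)$ forces every compact metrizable $G$-space $X$ to admit a $G$-invariant Borel probability measure: a linear section $\Theta$ of $0\to\mathbb{C}\to C(X)\oplus\mathbb{C}\to C(X)\to 0$ has second component a family of functionals $\varphi_t\colon C(X)\to\mathbb{C}$, and since $G$ acts trivially on the $\mathbb{C}$-summand, the asymptotic linearity, asymptotic contractivity, asymptotic unitality and asymptotic equivariance of $\Theta$ translate into $(\varphi_t)$ being bounded, asymptotically linear, asymptotically contractive, asymptotically unital and satisfying $\lim_{t\to\infty}(\varphi_t(g\cdot f)-\varphi_t(f))=0$; any weak$^*$-cluster point $\varphi$ of $(\varphi_t)$ as $t\to\infty$ is then a linear functional with $\varphi(1)=1=\|\varphi\|$, i.e.\ a $G$-invariant state on $C(X)$. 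Second, I would deduce amenability of $G$ from this measure-existence property, working inside $\mathrm{LUC}(G)$, the unital $C^*$-algebra of bounded left-uniformly continuous functions on $G$, on which the left-translation action is point-norm continuous. Because $G$ is second countable, every countable subset $F\subseteq\mathrm{LUC}(G)$ generates, together with the constants and the $G$-orbits of its elements, a \emph{separable} $G$-invariant unital $C^*$-subalgebra $A_F$ (separability using a countable dense subgroup of $G$ and continuity of the action); its spectrum is a compact metrizable $G$-space, so step one furnishes a $G$-invariant state on $A_F$, which extends by Hahn--Banach to a mean on $\mathrm{LUC}(G)$ restricting to a $G$-invariant functional on $A_F$. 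The sets $M_F$ of means on $\mathrm{LUC}(G)$ with this last property are weak$^*$-closed, nonempty, and satisfy $M_{F_1\cup F_2}\subseteq M_{F_1}\cap M_{F_2}$; by weak$^*$-compactness of the space of means, $\bigcap_F M_F\neq\emptyset$, and any element of it is a left-invariant mean on $\mathrm{LUC}(G)$, so $G$ is amenable.

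The main obstacle is precisely this second step of $(1)\Rightarrow(4)$: passing from invariant measures on all compact metrizable $G$-spaces to a genuine left-invariant mean on the (in general non-separable) algebra $\mathrm{LUC}(G)$. This is where second countability of $G$ is indispensable, since it forces $\mathrm{LUC}(G)$ to be exhausted by separable $G$-invariant subalgebras, and where a compactness argument is needed to patch together the partial means coming from the individual subalgebras. All the remaining implications are either direct citations (Day's theorem for $(4)\Rightarrow(3)$, Theorem~\ref{18-01-21h} for $(3)\Rightarrow(2)$) or a straightforward unwinding of the definitions.
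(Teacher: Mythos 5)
Your proof follows the same cycle of implications and the same strategy for $(1)\Rightarrow(4)$ as the paper: from the asymptotic section you extract a $G$-invariant Borel probability measure on each compact metrizable $G$-space. The divergence is at the final step: the paper stops there and cites the fact that this property is equivalent to amenability for second countable locally compact groups, whereas you actually prove it, exhausting $\mathrm{LUC}(G)$ by separable $G$-invariant unital $C^*$-subalgebras (this is where second countability enters), applying the invariant-measure hypothesis to each of their compact metrizable spectra, Hahn--Banach extending, and running a finite-intersection-property argument on the weak$^*$-compact set of means to produce a left-invariant mean on $\mathrm{LUC}(G)$. This is a correct and worthwhile elaboration of a step the paper leaves as a citation, and it makes the use of second countability transparent. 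A minor point on the other half of $(1)\Rightarrow(4)$: the paper packages the functionals $\varphi_t$ into a map $C(X)\to C_b[1,\infty)$ and passes to the corona $C_b[1,\infty)/C_0[1,\infty)$, which converts the asymptotic properties of $(\varphi_t)$ into exact ones in one stroke; your ``weak$^*$-cluster point of $(\varphi_t)$'' is the same idea but slightly informal, since the $\varphi_t$ are only asymptotically linear and hence not elements of $C(X)^*$ --- a pointwise subsequential limit along a countable dense subset of $C(X)$, or the corona device, makes it precise.
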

\begin{proof} The implication (4) $\Rightarrow$ (3) follows from the characterization of ame\-na\-bility mentioned above and (3) $\Rightarrow$ (2) follows from Theorem \ref{18-01-21h}. Since (2) $\Rightarrow$ (1) is trivial it suffices to show that (1) $\Rightarrow$ (4). Assume therefore that $G$ has the property stipulated in (1), and 
let $X$ be a compact, metrizable $G$-space. Then $C(X)$ is a $G$-algebra in the natural way and $C(X)$ is separable. Let $p_1\colon C(X)\oplus \mathbb C \to C(X)$ and $p_2\colon C(X)\oplus \mathbb C \to \mathbb C$ be the canonical projections. By assumption, there is an asymptotically equivariant linear section $\Theta\colon C(X) \to C(X) \oplus \mathbb C$ for $p_1$
which is also asymptotically unital. By exchanging $\Theta_t(f)$ with $$
\frac{1}{2}\left(\Theta_t(f) + \Theta_t(f^*)^*\right)
$$ 
for $f\in C(X)$,
we may assume that $\Theta$ is self-adjoint. Let $C_b[1,\infty)$ be the $C^*$-algebra of continuous bounded functions on $[1,\infty)$ and denote by $C_0[1,\infty)$ the ideal in $C_b[1,\infty)$ consisting of the functions vanishing at infinity.
Define $\Phi\colon C(X) \to C_b[1,\infty)$ by $\Phi(f)(t) = p_2 ( \Theta_t(f))$ for $f\in C(X)$, and let 
$$
\pi\colon  C_b[1,\infty) \to  C_b[1,\infty)/C_0[1,\infty)
$$ 
be the quotient map. Then 
$$
\pi \circ \Phi\colon C(X) \to C_b[1,\infty)/C_0([1,\infty)
$$ 
is a linear unital self-adjoint contraction satisfying that 
$$(\pi \circ \Phi)(g \cdot f)= (\pi \circ \Phi)(f)$$ for all $g \in G$ and all $f \in C(X)$. Let $\omega$ be a state of $C_b[1,\infty)/C_0[1,\infty)$. The composition $\omega \circ \pi \circ \Phi$ is then a self-adjoint $G$-invariant linear contraction into $\mathbb C$ which takes $1$ to $1$, and it is therefore a state on $C(X)$. It follows that $X$ has a $G$-invariant Borel probability measure. We have shown that all compact metrizable $G$-spaces have a $G$-invariant Borel probability measure which is one of the many equivalent conditions for amenability of $G$.
\end{proof}

\end{document}